\newcommand{\PP}{{\mathbb{P}}}
\newcommand{\QQ}{{\mathbb{Q}}}
\newcommand{\ga}{\alpha}
\newcommand{\gk}{\kappa}
\newcommand{\gn}{\nu}
\newcommand{\gt}{\tau}
\newcommand{\ZF}{\hbox{ZF}}
\newcommand{\func }{\mathord{:}}
\newcommand{\restricted}{\mathord{\restriction}}
\newcommand{\upto}{\mathord{<}}
\newcommand{\Card}{\ensuremath{\text{Card}}}
\newcommand{\ordered}[1]{\ensuremath{\langle #1 \rangle}}
\newcommand{\ordof}[2]{\ensuremath{\ordered{ #1 \mid #2 }}}
\DeclareMathOperator{\len}{l}
\DeclareMathOperator{\crit}{crit}
\DeclareMathOperator{\dom}{dom}
\DeclareMathOperator{\Col}{Col}
\DeclareMathOperator{\Add}{Add}
\DeclareMathOperator{\supp}{supp}
\DeclareMathOperator{\Ult}{Ult}
\DeclareMathOperator{\limdir}{lim\ dir}
\DeclareMathOperator{\mc}{mc}
\newcommand{\Es}{{\ensuremath{\bar{E}}\/}}
\newcommand{\PE}{{\ensuremath{P_{\Es}\/}}}
\newcommand{\VS}{V^*}
\newcommand{\MS}{{M^*}}
\newcommand{\NS}{{N^*}}
\newcommand{\MSt}{{M^*_\gt}}
\newcommand{\Mt}{{M_\gt}}
\newcommand{\ME}{{M_{\Es}}}
\newcommand{\MSE}{{M^*_{\Es}}}
\def\MPB{{\mathbb{P}}}
\newtheorem{theorem}{Theorem}[section]
\newtheorem{lemma}[theorem]{Lemma}
\newtheorem{definition}[theorem]{Definition}
\newtheorem{remark}[theorem]{Remark}
\newtheorem{claim}[theorem]{Claim}
\numberwithin{equation}{section}
\def\rmark{\mbox{$\rm\bf\rule{0.06em}{1.45ex}\kern-0.05em R$}}
\def\pmark{\mbox{$\rm\bf\rule{0.06em}{1.45ex}\kern-0.05em P$}}
\def\nmark{\mbox{$\rm\bf\rule{0.06em}{1.45ex}\kern-0.05em N$}}
\def\vdash{\mbox{$\rm\| \kern-0.13em -$}}
\newcommand{\lusim}[1]{\smash{\underset{\raisebox{1.2pt}[0cm][0cm]{$\sim$}}
{{#1}}}}
\begin{document}

\title[Killing the GCH everyhwere with a single real]{Killing the GCH everyhwere with a single real}

\author[Sy D. Friedman and M. Golshani]{Sy-David Friedman and Mohammad
  Golshani}

\thanks{The first author would like to thank the Austrian Science Fund (FWF)
for its support through research project P 22430-N13.}

\thanks{The second author's research was in part supported by a grant from IPM (No. 91030417). He also wishes to thank the Austrian Science Fund (FWF)
for its support through research project P 223316-N13.} \maketitle




\begin{abstract}
Shelah-Woodin \cite{shelah-woodin} investigate the possibility of
violating instances of $GCH$ through the addition of a single
real. In particular they show that it is possible to obtain a
failure of $CH$ by adding a single real to a model of $GCH$,
preserving cofinalities. In this article we strengthen their
result by showing that it is possible to violate $GCH$ at all
infinite cardinals by adding a single real to a model of $GCH.$
Our assumption is the existence of an $H(\kappa^{+3})$-strong
cardinal; by work of Gitik and Mitchell \cite{gitik-mitchell} it
is known that more than an $H(\kappa^{++})$-strong cardinal is
required.

\end{abstract}
\maketitle

\section{Introduction}

Shelah-Woodin \cite{shelah-woodin} investigate the possibility of
violating instances of GCH through the addition of a single real. In particular they show that it is possible to obtain a
failure of $CH$ by adding a single real to a model of $GCH$,
preserving cofinalities. In
this article we bring this work to its natural conclusion by showing
that it is possible to violate GCH at all infinite cardinals by adding
a single real to a model of GCH.

\begin{theorem}
Assume the consistency of an $H(\kappa^{+3})$-strong cardinal
$\kappa$. Then there exists a pair $(W,V)$ of models of $ZFC$ such that:

$(a)$ $W$ and $V$ have the same cardinals,

$(b)$ $GCH$ holds in $W,$

$(c)$ $V = W[R]$ for some real $R$,

$(d)$ $GCH$ fails at all infinite cardinals in $V.$
\end{theorem}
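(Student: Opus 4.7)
The plan is to construct $W$ by a preparatory forcing over a ground model containing an $H(\kappa^{+3})$-strong cardinal $\kappa$ so as to preserve GCH while rendering $\kappa$ suitably indestructible, and then to design a forcing $P$ over $W$ whose generic $G$ produces a failure of GCH at every infinite cardinal and which, by a Shelah--Woodin style coding, can be recovered from a single real $R \in W[G]$, so that one may take $V = W[R]$.

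First I would fix a model $W_0$ of ZFC $+$ GCH containing an $H(\kappa^{+3})$-strong cardinal $\kappa$, and perform a Laver-style preparation at $\kappa$ to obtain a model $W \models$ GCH in which $\kappa$ remains $H(\kappa^{+3})$-strong and this largeness is indestructible under the main forcing that follows. In $W$, I would then build a three-part forcing $P$: at each regular $\mu < \kappa$, a $\mu$-closed, $\mu^{++}$-c.c.\ Cohen-style forcing blowing up $2^\mu$ past $\mu^+$; at $\kappa$, a Gitik--Magidor extender-based Prikry-type forcing derived from a $(\kappa,\kappa^{+3})$-extender, turning $\kappa$ into a singular of cofinality $\omega$ and pushing $2^\kappa$ strictly above $\kappa^{++}$ (the extra strength beyond $H(\kappa^{++})$ being required by Gitik--Mitchell); and a collapse sending $\kappa$ to $\aleph_\omega$, so that the failure of GCH at $\kappa$ descends to a failure at $\aleph_\omega$ and, by elementary cardinal arithmetic, to failures at every infinite cardinal of the final model.

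The main work, and the main obstacle, is the reduction of the generic for $P$ to a single real. Following the Shelah--Woodin template, one would arrange $P$ to admit a product/tree presentation in which every condition factors into $\omega$-many coordinates, so that a generic filter is determined by an $\omega$-sequence of bits; a suitable almost-disjoint or Jensen-style coding then glues these bits into a real $R$ with $W[R] = W[G]$. Verifying that this coding is compatible with the extender-based Prikry part at $\kappa$ and the subsequent collapse to $\aleph_\omega$ is the delicate step: one must show that the Prikry sequence, the extender generic, and all the Cohen blow-ups can be absorbed simultaneously into a single real while preserving cardinals and cofinalities from $W$. Once this is achieved, setting $V = W[R]$ yields a pair satisfying conditions (a)--(d) of the theorem.
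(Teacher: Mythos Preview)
Your proposal has two genuine gaps, and the overall architecture differs fundamentally from the paper's.

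\textbf{The collapse to $\aleph_\omega$ is fatal.} Condition (a) requires $W$ and $V$ to have the \emph{same} cardinals, so you cannot collapse anything between $W$ and $V$. Moreover, the claim that a failure of GCH at $\aleph_\omega$ ``descends to failures at every infinite cardinal by elementary cardinal arithmetic'' is simply false: $2^{\aleph_\omega}>\aleph_{\omega+1}$ says nothing about $2^{\aleph_n}$ for $n<\omega$, nor about $2^{\aleph_{\omega+17}}$. And once $\kappa$ is collapsed you have lost the inaccessible cut-off needed to produce set models of ZFC by taking $V_\kappa$.

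\textbf{The coding step cannot be carried out in your setup.} Shelah--Woodin style almost-disjoint coding encodes a \emph{fixed} subset of $\omega_1$ into a real; it does not absorb an iteration that blows up $2^\mu$ at unboundedly many $\mu$. Jensen-type coding, on the other hand, requires an $L$-like ground model with fine structure and condensation; after a Laver preparation and an Easton iteration, $W$ is nowhere near $L$-like, and there is no known way to code such a large generic into a single real over such a $W$ while preserving cardinals.

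\textbf{What the paper actually does.} The order of operations is reversed. One works over the canonical inner model $K$ for an $H(\kappa^{+3})$-strong cardinal, which \emph{is} $L$-like. First force a Prikry product $\mathbb{P}_S$ over $K$ at a discrete set $S$ of measurables below $\kappa$, obtaining sequences $\langle x_\alpha:\alpha\in S\rangle$; $\kappa$ stays $H(\kappa^{+3})$-strong. Then apply Jensen coding over $K[\vec x]$ to get a real $R$ with $K[\vec x][R]=K[R]$, still preserving the strength of $\kappa$. Only \emph{now}, in $K[R]$, does one force a cardinal-preserving Merimovich-style extender-based Radin forcing producing a generic $C\subseteq S$ so that GCH fails everywhere below the (still inaccessible) $\kappa$. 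The key trick is that $C$ is not coded by a further real; instead one sets $y_\alpha=x_\alpha$ if $\alpha\in C$ and $y_\alpha=x_\alpha\setminus\{\min x_\alpha\}$ otherwise. By the geometric characterization of Prikry-product generics, $\langle y_\alpha\rangle$ is again $\mathbb{P}_S$-generic over $K$, and one computes $K[\vec y][R]=K[R][C]$. Taking $W=V_\kappa^{K[\vec y]}$ and $V=V_\kappa^{K[R][C]}=W[R]$ gives the pair. Thus the single real $R$ was fixed \emph{before} the GCH-killing forcing, and the latter's generic is hidden in the choice of the GCH ground model $W$ rather than in $R$.
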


To achieve the result we add a generic for a Prikry product, code it
by a real preserving $H(\kappa^{+3})$-strength and
then finish the proof by quoting a modifed version of a result of Merimovich
\cite{merimovich}.

We also show that assuming the existence of a proper class of measurable cardinals, it is possible to force Easton's theorem by adding a single real. More precisely:

\begin{theorem}
Let $M$ be a model of $ZFC+GCH+$ there exists a proper class of measurable cardinals. In $M$ let $F:REG \longrightarrow CARD$ be an Easton function, i.e a definable class function such that

$\hspace{1cm}\bullet$ $\kappa \leq \lambda \longrightarrow F(\kappa) \leq F(\lambda)$, and

$\hspace{1cm}\bullet$ $cf(F(\kappa)) > \kappa$.

Then there exists a pair $(W,V)$ of cardinal preserving extensions of $M$ such that

$(a)$ $W \models \ulcorner GCH \urcorner $,

$(b)$ $V = W[R]$ for some real $R$,

$(c)$ $V \models \ulcorner \forall \kappa \in REG, 2^{\kappa} \geq F(\kappa) \urcorner$.
\end{theorem}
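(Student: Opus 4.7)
The plan is to adapt the three-step strategy behind Theorem~1.1, replacing the $H(\kappa^{+3})$-strong cardinal by a proper class of measurables and replacing Merimovich's forcing by a reverse Easton iteration realizing~$F$. First, working in $M$, define the reverse Easton iteration $\mathbb{P}_F$ of length $\mathrm{Ord}$ whose stage at each regular $\kappa$ is $\mathrm{Add}(\kappa, F(\kappa))$, with Easton support. Under $GCH$ in $M$, the standard Easton arguments yield cardinal and cofinality preservation, and force $2^\kappa \geq F(\kappa)$ at every regular $\kappa$.

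Next, enumerate a cofinal proper class $\langle \kappa_\alpha : \alpha \in \mathrm{Ord}\rangle$ of measurable cardinals of $M$ with fixed normal measures $U_\alpha \in M$, and interleave Prikry-type forcings at the $\kappa_\alpha$ with the stages of $\mathbb{P}_F$. The generic yields, for each $\alpha$, a Prikry sequence $\vec{t}^\alpha$ through $\kappa_\alpha$; arrange the book-keeping so that these sequences carry enough information to reconstruct the $\mathbb{P}_F$-generic. Then use a Shelah--Woodin style diagonal coding to compress the entire Prikry-product generic into a single real $R$: the Prikry sequence at the smallest $\kappa_\alpha$ is almost-disjoint coded into $\omega$ yielding $R$, and the $\vec{t}^\alpha$ at larger $\kappa_\alpha$ are recovered by decoding upward through the class. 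Define $W$ to be the submodel of the full extension $V$ containing all of the decoding scaffolding — the almost-disjoint families, the measures $U_\alpha$, and the relevant blueprints from $M$ — but not $R$ itself; then $V = W[R]$ by construction, cardinals are preserved because every stage of the construction is (by design) cardinal-preserving, and $W \models GCH$ since the scaffolding is built from $GCH$-preserving ingredients.

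The principal difficulty is executing the coding step on a proper class scale: one must propagate the diagonal coding through the whole class of measurables while keeping the extension cardinal-preserving, ensuring that $W$ still satisfies $GCH$ after absorbing both the scaffolding and the Easton generic, and guaranteeing that the accumulated class-length information can in the end be unlocked from a single real. In Theorem~1.1 this coding is underwritten by $H(\kappa^{+3})$-strength; here we have only measurability to work with, so the verification that every regular $\kappa$ sits between consecutive $\kappa_\alpha$ in a way that lets its $F$-value be reconstructed from $R$ and $W$, together with the class-recursive proof of the chain condition and distributivity bounds needed for cardinal preservation and for $GCH$ in $W$, is the chief technical burden.
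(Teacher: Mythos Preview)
Your proposal has two genuine gaps.

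First, the coding step. You invoke a ``Shelah--Woodin style diagonal coding'' propagated upward through the class of measurables to compress all the Prikry sequences into a single real. No such technique exists; the Shelah--Woodin argument codes only a set-sized object. The standard tool for coding a class predicate into a real while preserving cardinals over a $GCH$ ground model is Jensen's coding theorem, and that is precisely what the paper uses: after forcing the class Prikry product to obtain $\langle x_\alpha : \alpha \in S\rangle$ over $M$, one applies Jensen coding to get a real $R$ with $M[\langle x_\alpha\rangle][R] = L[R]$.

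Second, and more fundamentally, your description of $W$ is incoherent. You want $W$ to contain enough ``scaffolding'' that adjoining $R$ recovers the Easton generic, yet also to satisfy $GCH$. But the Easton forcing is exactly the ingredient that destroys $GCH$; either $W$ already sees its generic (and $GCH$ fails in $W$) or it does not (and you have given no mechanism by which a single real recovers a class-sized Easton generic). The paper's solution is a trick you have missed entirely: after coding $\langle x_\alpha\rangle$ into $R$, one forces the Easton generic $C \subseteq S$ over $L[R]$, and then defines a \emph{second} Prikry-product sequence by
\[
y_\alpha = \begin{cases} x_\alpha & \text{if } \alpha \in C,\\ x_\alpha \setminus \{\min(x_\alpha)\} & \text{otherwise.}\end{cases}
\]
By the geometric characterization of Prikry genericity (Lemma~3.3), $\langle y_\alpha\rangle$ is again $\mathbb{P}_S$-generic over $M$, so $W = M[\langle y_\alpha\rangle]$ is a Prikry-product extension and satisfies $GCH$. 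In $V = W[R]$ one recovers $\langle x_\alpha\rangle$ from $R$ (since $L[R] \supseteq M[\langle x_\alpha\rangle]$) and then reads off $C$ by comparing $\langle x_\alpha\rangle$ with $\langle y_\alpha\rangle$; hence $V \supseteq L[R][C]$ and the Easton conclusion follows. The Easton generic is encoded in the \emph{difference} between two Prikry generics, not carried by either one alone --- this is the idea that makes $W \models GCH$ and $V = W[R]$ compatible.
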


The reason that in $(c)$ we do not require equality is that it might be possible that $F(\kappa)$ changes its cofinality in $V$ to $\omega,$ and then clearly $2^{\kappa} \neq F(\kappa)$ in $V$. To achieve the result we define a class forcing version of the Prikry product, code its generic by a real using Jensen's coding and then finish the proof by applying Easton's theorem.

\section{Proof of Theorem 1.1}

\subsection{Prikry products}

Assume GCH and suppose that $S$ is a set of measurable cardinals which is
\emph{discrete}, i.e., contains none of its limit points. Fix normal
measures $U_\alpha$ on $\alpha$ for $\alpha$ in $S$. Then $\mathbb{P}$$_S$
denotes the Prikry product of the forcings $\mathbb{P}_\alpha$,
$\alpha\in S$, where $\mathbb{P}_\alpha$ is the Prikry forcing associated with
the measure $U_\alpha$.
Thus
\begin{center}
$\PP_S = \{ \langle (s_{\alpha}, A_{\alpha}): \alpha \in S \rangle\in \prod_{\alpha \in S} \PP_{\alpha} : s_{\alpha}=\emptyset$ for all but finitely many $\alpha \in S  \}.$
\end{center}
For two conditions $p=\langle (s_{\alpha}, A_{\alpha}): \alpha \in S
\rangle$ and $q=\langle (t_{\alpha}, B_{\alpha}): \alpha \in S
\rangle$ in $\PP_S$ we define $p\leq q$ ($p$ is stronger than $q$) if
$(s_{\alpha}, A_{\alpha})\leq (t_{\alpha}, B_{\alpha})$ in
$\PP_{\alpha}$ for all $\alpha \in S.$ We also define the auxiliary
relation $p\leq^* q$ ($p$ is a direct or a Prikry extension of $q$) if
$p\leq q$ and $s_{\alpha}=t_{\alpha}$ for all $\alpha \in S.$
\footnote{Thus $\PP_S$ is forcing equivalent to the Magidor iteration
  of the Prikry forcings $\PP_{\alpha}, \alpha \in S.$}

A $\mathbb{P}$$_S$-generic is uniquely determined by a sequence
$\langle x_\alpha:\alpha\in S \rangle$, where each $x_\alpha$ is an $\omega$-sequence
cofinal in $\alpha$.
With a slight abuse of terminology, we say that
$\langle x_\alpha:\alpha\in S \rangle$ is \emph{$\PP_S$-generic}.

\begin{lemma} (Fuchs \cite{fuchs}, Magidor \cite{magidor}) \label{fuchs}
Suppose that $\langle x_\alpha:\alpha\in S \rangle$ is $\PP_S$-generic over $V$.\\
(a) $V$ and $V[\langle x_\alpha:\alpha\in S \rangle]$ have the same cardinals.\\
(b) The sequence $\langle x_\alpha:\alpha\in S \rangle$ obeys the
following ``geometric property'': If
$\langle X_\alpha:\alpha\in S \rangle$ belongs to $V$ and
$X_\alpha\in U_\alpha$ for each $\alpha\in S$,
then $\bigcup_{\alpha\in S} x_\alpha\setminus X_\alpha$ is finite.\\
(c) Conversely, suppose that $\langle y_\alpha:\alpha\in S \rangle$ is a sequence
(in any outer model of $V$)
satisfying the geometric property stated above. Then
$\langle y_\alpha: \alpha\in S \rangle$ is $\PP_S$-generic over $V$.\\
(d) Suppose $\alpha \in S$, $p \in \PP_S $ and $ \langle \Phi_{\gamma}: \gamma < \eta \rangle$ is a sequence of statements of the forcing language for $\PP_S$ where $\eta < \alpha.$ Then there exists $q \leq^* p$ such that $q \upharpoonright \alpha = p \upharpoonright \alpha$ and for each $\gamma < \eta$ if $r\leq q$ and $r$ decides $\Phi_{\gamma},$ then $(r \upharpoonright \alpha )\cup (q \upharpoonright [\alpha, \kappa))$ (where $\kappa=\sup(S)$) decides $\Phi_{\gamma}$ in the same way.
\end{lemma}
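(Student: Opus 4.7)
The plan is to prove (d) first---the strong Prikry property for the Magidor-style product $\PP_S$ is the technical heart of the lemma---and then derive (a), (b), and (c) from it by fairly standard methods.

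For (d), given $p \in \PP_S$ and $\langle \Phi_\gamma : \gamma < \eta \rangle$ with $\eta < \alpha$, I would produce $q \leq^* p$ with $q \restricted \alpha = p \restricted \alpha$ by a fusion-style argument on the tail forcing $\PP_{S \setminus \alpha}$. Treating $p \restricted [\alpha, \kappa)$ as a condition in the Magidor-style iteration of the $\PP_\beta$'s for $\beta \in S \setminus \alpha$, one uses the Prikry property of each $\PP_\beta$ together with the normality of $U_\beta$ to shrink its measure-one set so that, for any extension $r \leq p$, the decision of a single $\Phi_\gamma$ is already made by $r \restricted \alpha$ together with the fixed tail $q \restricted [\alpha, \kappa)$. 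Handling all $\eta$ statements simultaneously requires a diagonal intersection of the resulting measure-one sets at each coordinate $\beta \in S \setminus \alpha$. Since $\eta < \alpha \leq \beta$, this diagonal intersection stays in $U_\beta$, so the limit condition $q$ is a legitimate member of $\PP_S$ satisfying the conclusion of (d).

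Part (b) follows from a density observation: given $\langle X_\alpha : \alpha \in S \rangle \in V$ with each $X_\alpha \in U_\alpha$, the condition $q_{\vec X} = \langle (\emptyset, X_\alpha) : \alpha \in S \rangle$ lies in $\PP_S$, so the generic meets it via some $p = \langle (s_\alpha, A_\alpha) : \alpha \in S\rangle \leq q_{\vec X}$; then $s_\alpha = \emptyset$ and $x_\alpha \subseteq A_\alpha \subseteq X_\alpha$ for cofinitely many $\alpha$, while each exceptional $s_\alpha$ is finite, so $\bigcup_{\alpha \in S}(x_\alpha \setminus X_\alpha)$ is finite. Part (a) is then read off from (d): every bounded subset of $\alpha \in S$ in $V[G]$ is added by the initial segment $\PP_{S \cap \alpha}$, which under GCH and the discreteness of $S$ is too small to collapse cardinals in $[\sup(S \cap \alpha), \alpha]$; the $\alpha \in S$ themselves are preserved by the Prikry property at each coordinate, and cardinals above $\kappa = \sup(S)$ are handled by a straightforward size-count. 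For (c), given $\langle y_\alpha : \alpha \in S \rangle$ satisfying the geometric property, one verifies that the induced filter on $\PP_S$ meets every dense open $D \in V$: applying (d) yields, over any base condition, a $\leq^*$-extension $p = \langle (s_\alpha, X_\alpha) : \alpha \in S \rangle$ such that every extension of $p$ obtained by growing the stems into the $X_\alpha$'s already lies in $D$, and the geometric property then delivers, from a long-enough initial segment of the $y_\alpha$'s together with the $X_\alpha$'s, a condition in $D$ that belongs to the filter.

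The main obstacle is the fusion underlying (d): it must treat $\eta$-many statements across a product indexed by a potentially large set $S \setminus \alpha$, and one must verify at every step that the diagonal intersections of measure-one sets remain in each $U_\beta$. The bound $\eta < \alpha$ is used critically so that $\eta$-many intersections stay in $U_\beta$ for every $\beta \geq \alpha$, and the discreteness of $S$ rules out accumulation issues at limit points---this is precisely the Magidor-style argument for strong Prikry properties in full-support Prikry products, and once it is in hand the remaining parts of the lemma are essentially bookkeeping.
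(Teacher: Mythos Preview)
The paper does not supply its own proof of this lemma; it is stated with attribution to Fuchs and Magidor and then used as a black box. So there is no in-paper argument to compare your proposal against. Your overall strategy---prove the strong Prikry property (d) for the tail $\PP_{S\setminus\alpha}$ by a Magidor-style fusion, then read off (a), (b), (c)---is exactly the standard route taken in the cited references, and your sketches for (a), (c), and (d) are accurate outlines of those arguments.

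There is one genuine slip in your treatment of (b). You write that the generic ``meets'' the condition $q_{\vec X}=\langle(\emptyset,X_\alpha):\alpha\in S\rangle$ via some $p\le q_{\vec X}$, but the set $\{p:p\le q_{\vec X}\}$ is \emph{not} dense: any condition $p$ whose stem $s_\alpha^p$ already contains a point outside $X_\alpha$ is incompatible with $q_{\vec X}$, since every extension of $p$ retains that point in its stem. So genericity alone does not put any $p\le q_{\vec X}$ into $G$. The correct density statement is that $D_{\vec X}=\{p:\forall\alpha\in S,\ A_\alpha^p\subseteq X_\alpha\}$ is $\le^*$-dense (just intersect the measure-one sets with the $X_\alpha$'s); picking $p\in G\cap D_{\vec X}$, one gets $x_\alpha\setminus X_\alpha\subseteq x_\alpha\setminus A_\alpha^p\subseteq s_\alpha^p$ for every $\alpha$, and $\bigcup_\alpha s_\alpha^p$ is finite. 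Once you make this correction your argument for (b) goes through, and the rest of your proposal stands.
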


\begin{theorem}
Suppose that $\kappa$ is $H(\kappa^{+3})$-strong and $S$ is a discrete set
of measurable cardinals less than $\kappa$. Then after forcing with
$\mathbb{P}$$_S$, $\kappa$ remains $H(\kappa^{+3})$-strong.
\end{theorem}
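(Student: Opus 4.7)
The plan is to fix an elementary embedding $j: V \to M$ witnessing $H(\kappa^{+3})$-strength (so $\crit(j) = \kappa$, $j(\kappa) > \kappa^{+3}$, and $H(\kappa^{+3})^V \subseteq M$) and lift it through $\PP_S$ to an embedding $j^+: V[G] \to M[G^*]$ in $V[G]$, which will then witness $H(\kappa^{+3})$-strength of $\kappa$ in $V[G]$.

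The first task is to factor $j(\PP_S)$. Since $S \subseteq \kappa$ and $j \restriction \kappa = \id$, we have $j(S) \cap \kappa = S$. If $S$ is bounded below $\kappa$ then $|S| < \kappa$, so $S \in V_\kappa$, $j(S) = S$, and $j$ lifts trivially. Otherwise $\sup(S) = \kappa$, and since $j(S)$ is discrete in $M$ while $\kappa$ is a limit point of $S \subseteq j(S)$, we must have $\kappa \notin j(S)$. Writing $T = j(S) \setminus \kappa$, inside $M$ the forcing $j(\PP_S) = \PP_{j(S)}$ splits as the product $\PP_S \times \mathbb{Q}$, where $\mathbb{Q} = \PP_T$ is computed in $M$ with the measures $j(U)_\beta$ (each $\beta \in T$ being measurable in $M$ and lying strictly above $\kappa^+$). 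For $p \in \PP_S$, the image $j(p)$ agrees with $p$ on the first factor and is some condition $p^* := j(p) \restriction T$ on $\mathbb{Q}$; if $H$ is any $\mathbb{Q}$-generic over $M[G]$ containing every $p^*$ for $p \in G$, then $G^* := G \times H \supseteq j[G]$ and $j$ lifts via $j^+(\dot x^G) := j(\dot x)^{G^*}$.

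The main obstacle is constructing such an $H$ inside $V[G]$. One first produces a master condition: the filter $F = \{p^* : p \in G\}$ has size at most $\kappa^+$ and, since every $\PP_S$-name in $H(\kappa^{+3})^V$ belongs to $M$, it lies in $M[G]$. Each $j(U)_\beta$ with $\beta \in T$ is $\beta$-complete with $\beta$ well above $\kappa^+$, so $(\mathbb{Q}, \leq^*)$ is highly closed in $M$ (and this closure is preserved in $M[G]$ since $\PP_S$ has size only $\kappa^+$), yielding a $\leq^*$-lower bound $q^*$ of $F$. The delicate part is then extending $q^*$ to a true generic $H$: since $|\mathbb{Q}|^M$ exceeds $\kappa^{+3}$, a naive enumeration of maximal antichains is infeasible. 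Instead, I would invoke the Fuchs--Magidor geometric criterion (Lemma~2.1(c) applied in $M$), reducing the task to producing $\omega$-sequences $y_\beta \subseteq \beta$ cofinal in each $\beta \in T$ such that for every $\langle A_\beta \rangle \in M$ with $A_\beta \in j(U)_\beta$, the union $\bigcup_\beta (y_\beta \setminus A_\beta)$ is finite. Such a sequence can be built by a diagonal construction refining $q^*$, using the Prikry property for $\mathbb{Q}$ (Lemma~2.1(d) in $M$) together with the closure of $\leq^*$.

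Once $H$ is constructed, elementarity of $j^+$ is routine. To verify $H(\kappa^{+3})^{V[G]} \subseteq M[G^*]$: by Lemma~2.1(a) cardinals (and hence $\kappa^{+3}$) are preserved, and any $x \in H(\kappa^{+3})^{V[G]}$ has a $\PP_S$-name $\dot x \in V$ of hereditary size $< \kappa^{+3}$, so $\dot x \in H(\kappa^{+3})^V \subseteq M$ and $x = \dot x^G \in M[G] \subseteq M[G^*]$. Hence $j^+$ witnesses $H(\kappa^{+3})$-strength of $\kappa$ in $V[G]$, completing the proof.
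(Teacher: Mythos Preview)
Your approach is the natural ``lift the embedding'' strategy, but it is not what the paper does and it has a genuine gap. The paper does not try to build an upstairs generic for $\mathbb{Q}=(\PP_{j(S)\setminus\kappa})^M$; instead it works internally in $V[G]$, defining for each finite $a\subseteq\kappa^{+3}$ a $\kappa$-complete ultrafilter $E_a^*$ extending $E_a$, by declaring $B\in E_a^*$ when some $\leq^*$-extension of $j(p)$ agreeing with $p$ below $\delta=\min(j(S)\setminus\kappa)$ forces $a\in j(\dot B)$. Lemma~2.1(d), applied in $M$ above $\delta$, is exactly what makes each $E_a^*$ a $\kappa$-complete ultrafilter. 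One then takes the extender ultrapower $j^*:V[G]\to M^*$ and verifies $H(\kappa^{+3})\cup\{G\}\subseteq M^*$ by analysing which pairs $(f,a)$ represent elements of $H(\kappa^{+3})$.

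The gap in your argument is the closure of $M$. With only $H(\kappa^{+3})$-strength, $j$ is given by a short extender and $M$ is closed under $\kappa$-sequences but \emph{not} under $\kappa^+$-sequences. Your claim that $F=\{j(p)\restriction T:p\in G\}\in M[G]$ amounts to asserting that the function $p\mapsto j(p)\restriction T$ (equivalently $j\restriction\PP_S$, an object of size $\kappa^+$) lies in $M$; the sentence ``every $\PP_S$-name in $H(\kappa^{+3})^V$ belongs to $M$'' does not give this, since the relevant name already encodes $j\restriction\PP_S$. Without $F\in M[G]$ you cannot invoke the $M[G]$-closure of $(\mathbb{Q},\leq^*)$ to obtain $q^*$, and the same obstacle recurs in your ``diagonal construction'' of $H$: the $\leq^*$-descending chain you would build lives in $V[G]$, whereas $j(U)_\beta$ is $\beta$-complete only for sequences lying in $M$, not for arbitrary $V[G]$-sequences (indeed $\beta$ need not even be a cardinal in $V$). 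This lack of external closure is precisely why lifting arguments for strong (extender) embeddings are more delicate than for supercompactness, and it is why the paper bypasses the construction of an upstairs generic entirely in favour of redefining the extender in $V[G]$.
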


\begin{proof}
Suppose that $j: V \rightarrow M \supseteq H(\kappa^{+3}),$
$crit(j)=\kappa$ is an elementary embedding witnessing the  $H(\kappa^{+3})-$strength of $\kappa.$ We can assume that $j$ is derived from an extender $E= \langle E_{a}: a \in [\kappa^{+3}]^{< \omega} \rangle$. Then for each $a \in [\kappa^{+3}]^{< \omega} , E_{a}$ is a $\kappa-$complete ultrafilter on $[\kappa]^{|a|}$ and if $j_{a}: V \rightarrow M_{a} \cong Ult(V, E_{a})$ is the corresponding elementary embedding then for all $B \subseteq [\kappa]^{|a|},$ we have
 $B \in E_{a} \Leftrightarrow a \in j_{a}(B).$
We also have an embedding $k_{a}: M_{a} \rightarrow M$ such that $k_{a}\circ j_{a}=j.$

 We show that $\kappa$ remains $H(\kappa^{+3})-$strong in the generic extension by $\mathbb{P}$$_{S}$. The proof uses ideas from \cite{gitik-shelah} and \cite{magidor}. Let $G$ be $\mathbb{P}$$_{S}-$generic over $V$.
Also let $\delta=min(j(S)- \kappa) > \kappa.$

Working in $V[G],$ we define for each $a \in [\kappa^{+3}]^{< \omega_1}, E_{a}^{*}$ as follows: Let $\xi=o.t(a),$ and let $\dot{a}$ be a $\mathbb{P}$$_{S}-$name for $a$ such that
\begin{center}
 $\vdash \ulcorner \dot{a} \subseteq \kappa^{+3}$ and $o.t(\dot{a})=\xi \urcorner$
\end{center}
For $p \in \mathbb{P}$$_{S}$ define
$p \vdash \ulcorner \dot{B} \in \dot{E}_{a}^{*} \urcorner $ iff
\\
(1) $p \vdash \ulcorner \dot{B} \subseteq [\kappa]^{\xi} \urcorner,$
\\
(2) there exists $q \leq^{*} j(p)$ in $j(\mathbb{P}$$_{S})$ such that $q\upharpoonright\delta=j(p)\upharpoonright\delta=p,$ and
$q \vdash^{M} \ulcorner \dot{a} \in j(\dot{B})  \urcorner.$

Let $E_{a}^{*}=\dot{E}_{a}^{*}[G].$
It is easily seen that the above definition is well-defined.

\begin{lemma}
$(a)$ $E_{a}^{*}$ is a $\kappa-$complete non-principal ultrafilter on $[\kappa]^{\xi},$

$(b)$ If $a \in V$ is finite, then $E_{a}^{*}$ extends $E_{a}$,
\end{lemma}
\begin{proof}
$(a)$ We just prove that $E_{a}^{*}$ is $\kappa-$complete.  Suppose that $p \in \mathbb{P}$$_{S}$ and $p \vdash \ulcorner      [\kappa]^{\xi}= \bigcup \{\dot{B}_{\gamma}: \gamma < \eta \}  \urcorner$ where $\eta < \kappa.$ Then $j(p) \vdash^{M} \ulcorner      [j(\kappa)]^{\xi}= \bigcup \{j(\dot{B}_{\gamma}): \gamma < \eta \}  \urcorner$.

Working in $M$ consider $\delta, j(p)$ and the sequence $( \Phi_{\gamma}: \gamma < \eta )$ of sentences where for each $\gamma < \eta, \Phi_{\gamma}$ is ``$ \dot{a} \in j(\dot{B}_{\gamma})$'' It then follows from Lemma 2.1.(d) that there is $q \leq^{*} j(p)$ in $j(\mathbb{P}$$_{S})$ such that for each $\gamma < \eta$
\begin{itemize}
\item $q\upharpoonright \delta=j(p)\upharpoonright \delta=p,$ \item if $r \leq q$ and $r$ decides $\Phi_{\gamma},$ then $(r\upharpoonright\delta) \cup (q\upharpoonright [\delta,j(\kappa))$ decides $\Phi_{\gamma}$ in the same way.
\end{itemize}
Now $q \vdash^{M} \ulcorner \dot{a} \in [j(\kappa)]^{\xi}=\bigcup \{j(\dot{B}_{\gamma}): \gamma < \eta \}  \urcorner$ and hence we can find $r \leq q$ and $\gamma < \eta$ such that $r \vdash \ulcorner \Phi_{\gamma} \urcorner.$ Let $t=(r \upharpoonright \delta) \cup (q\upharpoonright [\delta,j(\kappa)).$ It is now easy to show that $t\upharpoonright \delta \leq p$ and $t\upharpoonright \delta \vdash \ulcorner   \dot{B}_{\gamma} \in \dot{E_{a}^{*}}\urcorner.$ This completes the proof of the $\kappa-$completeness of $E_{a}^{*}.$

$(b)$ Suppose $a \in V$ is finite. Let $B \in E_{a}$ and $p \in \mathbb{P}$$_{S}.$ We show that $p \vdash \ulcorner B \in \dot{E}_{a}^{*}  \urcorner.$ Let $q=j(p).$ Then $q$ has the required properties in the definition above which gives the result.
\end{proof}

In $V[G],$ for each $a \in [\kappa^{+3}]^{< \omega_1}$ let $j_{a}^{*}: V[G] \rightarrow M_{a}^{*} \simeq Ult(V[G], E_{a}^{*})$ be the corresponding elementary embedding. Also for $a \subseteq b$ let $k_{a,b}: M_{a}^{*} \rightarrow M_{b}^{*}$ be the natural induced elementary embedding. Let
\begin{center}
$\langle M^{*}, \langle k_{a}^{*}: a \in [\kappa^{+3}]^{< \omega_1} \rangle  \rangle=dirlim \langle \langle M_{a}^{*}:a \in [\kappa^{+3}]^{< \omega_1} \rangle, \langle k_{a,b}^{*}: a \subseteq b \rangle \rangle.  $
\end{center}
Also let $j^{*}: V[G] \rightarrow M^{*}$  be the induced embedding.
\begin{lemma}
$M^{*}$ is well-founded
\end{lemma}
\begin{proof}
Suppose not. Then there is a sequence $(m_i: i< \omega )$ of elements of $M^{*}$ such that
\begin{center}
$... \in^{*} m_2 \in^{*} m_1 \in^{*} m_0$
\end{center}
where $\in^{*}=\in_{M^{*}}.$ For each $i < \omega$ choose $a_i$ and $f_i$ such that $m_i=k_{a_i}^{*}([f_i]_{E_{a_i}^{*}}).$ Let $a= \bigcup \{ a_i:i < \omega \}.$ Then $a \in [\kappa^{+3}]^{< \omega_1}$ and for some $g_i, m_i=k_{a}^{*}([g_i]_{E_{a}^{*}}).$ It then follows from the elementarity of $k_{a}^{*}$ that
\begin{center}
$...\in [g_2]_{E_{a}^{*}} \in [g_1]_{E_{a}^{*}} \in [g_0]_{E_{a}^{*}}.$
\end{center}

This is in contradiction with Lemma 2.3 which implies $M_{a}^{*}$ is well-founded. Thus $M^{*}$ is well-founded and the lemma follows.
\end{proof}

If now we restrict ourself to $E_{a}^{*}$ for finite $a$, then the smaller direct limit embeds into the full direct limit and is therefore well-founded. From now on, let $M^*$ denote the smaller direct limit; accordingly each $E_{a}^{*}$ is now given by the usual extender definition and $j^*$ is the ultrapower embedding.

Note that $j^{*}: V[G] \rightarrow M^{*}$ is an elementary embedding with critical point $\kappa.$ We show that it is an $H(\kappa^{+3})-$strong embedding. For this it suffices to show that $H(\kappa^{+3})^{V[G]} \subseteq M^{*}.$ But since  $H(\kappa^{+3})^{V[G]}=H(\kappa^{+3})[G]$, it suffices to show that
 $H(\kappa^{+3}) \subseteq M^{*}$ and $G \in M^{*}.$

For this purpose we introduce some special functions in $V$.
Let $F: \kappa \rightarrow \kappa$ be defined by $F(\alpha)=\alpha^{+3}.$ Then $j(F)(\kappa)= \kappa^{+3}.$ Now for each $a \in [\kappa^{+3}]^{<\omega}$ with $\kappa \in a$ and $|a|=n$ define the function $G_{a}: [\kappa]^{n} \rightarrow \kappa$ by $G(\alpha_1, ..., \alpha_n)=\alpha_i^{+3}$ where $\kappa$ is the $i-$th element of $a$. It is clear that $j(G_{a})(a)=j(F)(\kappa)= \kappa^{+3}.$
 Also let $r: \kappa \rightarrow H(\kappa)$ be defined by $r(\alpha)=H(\alpha).$

Suppose $f:[\kappa]^{n} \rightarrow H(\kappa)^{V[G]}$ is in $V[G]$ and $a$ is a finite subset of $\kappa^{+3}$ containing $\kappa.$ We say the pair $(f, a)$ has the property $(*)$ iff
\begin{center}
 $\{\gamma: f(\gamma) \in r\circ G_{a}(\gamma) \} \in E_{a}^{*}.$ \footnote{It can be shown that $(f, a)$ has property $(*)$ iff $[f]_{E^*_a}$ represents an element of $H(\kappa^{+3})$ in $M^*_a$.}
\end{center}
We have the following easy lemma.
\begin{lemma}
$(a)$ If $j^{*}(f)(a)=j^{*}(g)(b)$ where $\kappa$ is an element of both $a$ and $b$, then $(f, a)$ has the property $(*)$ iff $(g, b)$ has the property $(*)$,

$(b)$ If $(f, a)$ has the property $(*)$ and $j^{*}(g)(b) \in j^{*}(f)(a)$ for some $b$ containing $\kappa$, then $(g, b)$ has the property $(*).$
\end{lemma}
\begin{lemma}
If $(f, a)$ has the property $(*),$ then there is a function $h: [\kappa]^{m} \rightarrow H(\kappa)$ in $V$ and a finite set $b \subseteq \kappa^{+3}$ such that $j^{*}(f)(a)=j^{*}(h)(b).$
\end{lemma}
\begin{proof}
Let $B = \{\gamma: f(\gamma) \in r\circ G_{a}(\gamma) \}.$ Since $(f, a)$ has the property $(*), B \in E_{a}^{*}.$
 Let $\dot{B}$ be a name for $B$ and let $p \vdash \ulcorner \dot{B} \in \dot{E_{a}^{*}} \urcorner.$ This means that there is some $q \leq^{*} j(p)$ such that $q\upharpoonright \delta=j(p)\upharpoonright\delta=p$ and $q \vdash^{M} \ulcorner a \in j(\dot{B}) \urcorner.$ Hence we have $q \vdash^{M} \ulcorner  j(\dot{f})(a) \in j(r\circ G_{a})(a)=H(\kappa^{+3}) \urcorner.$

For each $c \in H(\kappa^{+3})$ let $\Phi_{c}$ be the sentence ``$j(\dot{f})(a)=c$''. By applying Lemma 2.1.(d) we can find $r \leq^{*} q$ such that for every $c \in H(\kappa^{+3})$
\begin{itemize}
\item $r\upharpoonright\delta=q\upharpoonright\delta=p,$ \item if $s \leq r$ and $s$ decides $\Phi_{c}$ then $(s\upharpoonright\delta)\cup (r\upharpoonright[\delta, j(\kappa)))$ decides $\Phi_{c}$ in the same way.
\end{itemize}
Now $r \vdash^{M} \ulcorner  j(\dot{f})(a) \in j(r\circ G_{a})(a)=H(\kappa^{+3}) \urcorner,$ hence there are $s \leq r$  and $c \in H(\kappa^{+3})$ such that $s \vdash \ulcorner \Phi_{c} \urcorner.$ Let $t=(s\upharpoonright\delta)\cup (r\upharpoonright[\delta, j(\kappa)))$. By above, $t \vdash^{M} \ulcorner \Phi_{c} \urcorner.$

Since $c \in H(\kappa^{+3}),$ there is a function $h:[\kappa]^{m} \rightarrow H(\kappa)$ and a finite $b \subseteq \kappa^{+3}$ such that $c=j(h)(b).$  Thus $t \vdash^{M} \ulcorner j(\dot{f})(a)=j(h)(b) \urcorner$ and the result follows.
\end{proof}

Define the sets $X$ and $X^{*}$ as follows
\begin{center}

$X=\{j(f)(a): (f, a)$ is in $V$ and has the  property $(*)\},$

$\hspace{.7cm}$ $X^{*}=\{j^{*}(f)(a): (f, a)$ is in $V[G]$ and has the  property $(*) \}.$
\end{center}
It follows from Lemma 2.5 that $X$ and $X^{*}$ are transitive.
\begin{lemma}
If $(f, a)$ has the property $(*)$ and $f \in V,$ then $j^{*}(f)(a)=j(f)(a).$
\end{lemma}
\begin{proof}
Define $\Phi: X \rightarrow X^*$ by  $\Phi(j(f)(a))=j^{*}(f)(a).$ Then:
\\
(1) $\Phi$ is well-defined: To see this suppose that $j(f)(a)=j(g)(b).$ We may further suppose that $a=b.$ It then follows that  $j(f)(a)=k_{a}([f]_{E_a})=k_{a}([g]_{E_a})=j(g)(b),$ and hence $B=\{x : f(x)=g(x)    \} \in E_a.$ By Lemma 2.3(b), $B \in E_{a}^*$ and hence $j^{*}(f)(a)=k^{*}_{a}([f]_{E^{*}_a})=k^{*}_{a}([g]_{E^{*}_a})=j^{*}(g)(b).$
\\
(2) $\Phi$ preserves the $\in$ relation: As in (1).

Thus $\Phi$ is an isomorphism, and since both of $X$ and $X^{*}$ are transitive, it must be the identity. The lemma follows.
\end{proof}
\begin{lemma}
$H(\kappa^{+3}) \subseteq M^{*}.$
\end{lemma}
\begin{proof}
We have $H(\kappa^{+3}) \subseteq X \subseteq X^{*} \subseteq M^{*}.$
$\Box$
\begin{lemma}
$G \in M^{*}$
\end{lemma}
\begin{proof}
First note that $\mathbb{P}$$_{S} \in H(\kappa^{+3}) \subseteq M^{*}.$ Define $f: \kappa \rightarrow H(\kappa)^{V[G]}$ by $f(\alpha)=G_{\alpha},$ where $G_{\alpha}= G \cap H(\alpha)$ is $\mathbb{P}$$_{S} \cap H(\alpha)-$generic over $V$.  Show that $G=j^{*}(f)(\kappa),$ and hence $G \in M^{*}.$ By maximality of $G$ it suffices to show that $G \subseteq j^{*}(f)(\kappa).$

Let $p \in G.$ Choose $h:[\kappa]^{n} \rightarrow H(\kappa)$ in $V$ and a finite set $a \subseteq \kappa^{+3}$ containing $\kappa$ such that $p=j(h)(a).$ Then by Lemma 2.7 $p=j^{*}(h)(a).$ Define $f_{a}(\alpha_1, ..., \alpha_n)=f(\alpha_i),$ where $\kappa$ is the $i-$th element of $a$. Then $j^{*}(f_{a})(a)=j^{*}(f)(\kappa).$  Now we have to prove that $j^{*}(h)(a) \in j^{*}(f_{a})(a).$

Let $\dot{f_{a}}$ be a $\MPB_{S}-$name for $f_{a}$ such that $ \vdash_{\MPB_S} \ulcorner \dot{f_{a}}(\alpha_1, ..., \alpha_n)=\dot{G_{\alpha_{i}}}   \urcorner.$ Then $\vdash_{j(\MPB_S)} \ulcorner j(\dot{f_{a}})(a)=\dot{G} \urcorner$ and hence
$\vdash_{j(\MPB_S)} \ulcorner j(h)(a) \in j(\dot{f_{a}})(a) \urcorner.$ The lemma follows.
\end{proof}

\subsection{Coding}

Friedman \cite{glc} presents a method for creating reals which are
class-generic (but not set-generic) over a sufficiently $L$-like
model, preserving Woodin cardinals. A similar method can be used to
preserve strong cardinals. However the general problem of coding a predicate
into a real while preserving large cardinal properties is open;
 we show here that this is possible if the predicate is a
sequence which is generic for a discrete Prikry product.

\begin{theorem}
Suppose that $K$ is the canonical inner model for an $H(\kappa^{+3})$-strong
cardinal $\kappa$. Suppose that $S$ is the discrete set consisting of those measurable
cardinals less than $\kappa$ in $K$ which are not limits of measurable
cardinals in $K$. Also let $(x_\alpha:\alpha\in S)$ be
$\mathbb{P}$$_S$-generic over $K$ for the measures $(U_\alpha:\alpha\in
S)$, where $U_\alpha$ is the unique normal measure on $\alpha$ in $K$. Then there
is a cofinality-preserving set-forcing $\mathbb{P}$ for adding a real $R$
over $K[(x_\alpha:\alpha\in S)]$ such that $K[(x_\alpha:\alpha\in S)][R] = K[R]$ and $\kappa$ remains
$H(\kappa^{+3})$-strong in $K[R]$.
\end{theorem}

\begin{proof} We will follow the proof of Jensen's coding theorem from
\cite{my-book}, section 4.2, making use of Lemma \ref{fuchs} to argue
that the relevant $\Sigma_1$ Skolem hulls taken with respect to
certain initial segments
of $K$ are also $\Sigma_1$ elementary when the Prikry product generic
is adjoined. We must impose some minor changes to the notion of
``string $s$'' and to the coding structures $\mathcal{A}^{s}$, $\tilde{\mathcal{A}}^{s}$,
but for the most part the argument remains the same. The preservation
of $H(\kappa^{+3})$-strength is based on ideas from \cite{glc}.

We work in $L[E][(x_\alpha:\alpha\in S)]$ where
$K=L[E]$ is a fine-structural inner model built from the sequence $E$ of
(partial) extenders. Abbreviate $(x_\alpha:\alpha\in S)$ as $\vec x$ and for
any $\beta$ let $\vec x(\le\beta)$ denote
$(x_\alpha:\alpha\in S$, $\alpha\le\beta)$. We may also assume that
for $\alpha$ in $S$, the min of $x_\alpha$ is greater than the
supremum of $S\cap \alpha$, using the discreteness of the set $S$.
Let $A$ denote the union of the $x_\alpha$, $\alpha \in S$.

$\Card$ denotes the class of infinite cardinals. For $\alpha$ in
$\Card$ we define the ordinals $\mu^{<\eta},\mu^\eta$ by induction
on $\eta\in[\alpha,\alpha^+)$. An ordinal $\mu$ is a \emph{$\ZF^-$ ordinal}
iff $L_\mu[E,\vec x(\le\alpha)]$ is a model of $\ZF$ minus Power
Set. Define:
$\mu^{<\eta}=\cup\{\mu^\xi: \xi<\eta\}\cup\alpha$, $\mu^\eta=$ the
least limit of $\ZF^-$ ordinals $\mu$ such that $\mu$ is
greater than $\mu^{<\eta}$ and, setting
$\mathcal{A}^\eta=L_\mu[E,\vec x(\le\alpha)]$ we have that
$\mathcal{A}^\eta\models\alpha$ is the largest cardinal.

$S_\alpha$, the set of \emph{strings} at $\alpha$ consists of all
$s:[\alpha,|s|)\to 2$, $\alpha\le |s|<\alpha^+$, such that $|s|$
is a multiple of $\alpha$ and $s$ belongs to ${\mathcal A}^{|s|}$.
We write $s\le t$ when $t$ extends $s$ and $s<t$ when $t$ properly
extends $s$.  For $s\in S_\alpha$ we write $\mathcal{A}^s$ for
$\mathcal{A}^{|s|}$ and $\mu^s$ for $\mu^{|s|}$.

For later use (see ``Limit Precoding'') we also define
$\tilde\mu^s < \mu^s$ to be the
least $\ZF^-$ ordinal $\mu$ greater than $\mu^{<|s|}$ such that the
structure $L_\mu[E,\vec x(\le\alpha)]$ contains $s$ and satisfies that
$\alpha$ is the largest cardinal. The resulting structure $\tilde{\mathcal
  A}^s=L_{\tilde\mu^s}[E,x(\le\alpha)]$ is a proper initial segment of
${\mathcal A}^s$ and, like ${\mathcal A}^s$, each element of $\tilde{\mathcal A}^s$ is
$\Sigma_1$ definable in $\tilde{\mathcal A}^s$ from parameters in $\alpha\cup\{\vec
x(\le\alpha),s\}$. (We say that $\tilde{\mathcal A}^s$, ${\mathcal A}^s$
are \emph{$\Sigma_1$ projectible to $\alpha$} with parameters $\vec x(\le\alpha),s$.)

To set up the coding we need the functions $f^s$, defined as follows:
For $\alpha$ an uncountable cardinal, $s$ in $S_\alpha$ and
$i<\alpha$ let $H^s(i)$ denote the $\Sigma_1$ Skolem hull of
$i\cup\{\vec x(\le\alpha),s \}$ in ${\mathcal A}^s$. Then $f^s(i)$ is the
ordertype of $H^s(i)\cap Ord$. For $\alpha$ a successor cardinal we
define the coding set $b^s$ to be the range of $f^s\upharpoonright B^s$
where $B^s$ consists of the successor elements of $\{i<\alpha: i$
is a limit of $j$ such that $j=H^s(j)\cap\alpha\}$.

We describe a cofinality-preserving forcing which codes $K[\vec x]$
into $K[X]$ for some $X\subseteq\omega_1$, preserving the
$H(\kappa^{+3})$-strength of $\kappa$. Then a simple $c.c.c$ forcing can
be used to code $X$ into the desired real $R$.

We need a partition of the ordinals into four pieces: Let $B,C,D,F$
denote the classes of ordinals which are congruent to $0,1,2,3$ mod
$4$, respectively (The letters $A$ and $E$ are already used for other purposes). For any ordinal $\alpha$, $\alpha^B$ denotes the
$\alpha$-th element of $B$ and for any set $Y$ of ordinals, $Y^B$
denotes the set of $\alpha^B$ for $\alpha$ in $Y$ (similarly for
$C,D,F$).

\emph{The successor coding:} Suppose $\alpha\in\Card$ and $s\in
S_{\alpha^+}$. A condition in $R^s$ is a pair $(t,t^*)$ where $t\in
S_\alpha$, $t^*\subseteq \{b^{s\upharpoonright\eta}: \eta\in
[\alpha^+,|s|)\}\cup |t|$, $card(t^*)\le\alpha$.
Extension is defined by: $(t_0,t_0^*)\le (t_1,t_1^*)$ iff $t_0$
extends $t_1$, $t_0^*$ contains $t_1^*$ and:
\\
(1) If $|t_1|\le\gamma^B <|t_0|$ and $\gamma\in
b^{s\upharpoonright\eta}\in t_1^*$ then
$t_0(\gamma^B)=0$ or $s(\eta)$.\\
(2) If $|t_1|\le\gamma^C < |t_0|$ and $\gamma=\langle
\gamma_0,\gamma_1\rangle$ with $\gamma_0\in A \cap t_1^*$ then
$t_0(\gamma^C)=0$ (where $\langle\cdot,\cdot\rangle$ is G\"odel
pairing of ordinals).

An $R^s$-generic over ${\mathcal A}^s$ adds (and is uniquely determined
by) a function $T:\alpha^+\to 2$ such that $s(\eta)=0$ iff
$T(\gamma^B)=0$ for sufficiently large $\gamma\in
B^{s\upharpoonright\eta}$ and such that for $\gamma_0<\alpha^+$,
$\gamma_0\in A$ iff $T(\langle \gamma_0,\gamma_1\rangle^C)=0$ for
sufficiently large $\gamma_1<\alpha^+$.

\emph{The limit precoding.}
Suppose that $\alpha$ is an infinite cardinal and $s$ belongs to
$S_\alpha$.
We say that $X\subseteq\alpha$ \emph{precodes} $s$
if $X$ is the $\Sigma_1$ theory of $\tilde{\mathcal A}^s$ with parameters
from $\alpha\cup\{\vec x(\le\alpha),s\}$, viewed as a subset of $\alpha$.

\emph{The limit coding.}
Suppose that $\alpha$ is an uncountable limit cardinal, $s\in
S_\alpha$ and $p$ is a sequence $((p_\beta,p_\beta^*): \beta\in
\Card\cap\alpha)$ where $p_\beta\in S_\beta$ for each
$\beta\in\Card\cap\alpha$. We will define what it means for $p$ to
``code $s$''. First define the sequence
$(s_\gamma:\gamma\le\gamma_0)$ of elements of $S_\alpha$ as
follows: Let $s_0=\emptyset$. For limit $\gamma\le\gamma_0$,
$s_\gamma$ is the union of the $s_\delta$, $\delta<\gamma$. Now
suppose that $s_\gamma$ is defined and for successor cardinals
$\beta$ less than $\alpha$ let $f_p^{s_\gamma}(\beta)$ be the least
  $\delta\ge f^{s_\gamma}(\beta)$ such that $p_\beta(\delta^D)=1$, if
  such a $\delta$ exists. If $f_p^{s_\gamma}(\beta)$ is undefined for
cofinally many successor cardinals $\beta<\alpha$ then set $\gamma_0
=\gamma$. Otherwise define $X\subseteq\alpha$ by: $\delta\in X$ iff
$p_\beta((f^{s_\gamma}_p(\beta)+1+\delta)^D)=1$ for sufficiently large
successor cardinals $\beta<\alpha$. If $Even(X)=\{\delta:
2\delta\in X\}$ precodes an element $t$ of $S_\alpha$ extending
$s_\gamma$ such that ${\mathcal A}^t$ contains $X$ and the function
$f^{s_\gamma}_p$, then set $s_{\gamma+1}=t$. Otherwise let
$s_{\gamma+1}$ be $s_\gamma * X^F$ (i.e. the concatenation of
$s_\gamma$ with $X^F$ viewed as a sequence of length $\alpha$),
provided $s_\gamma*X^F$ belongs to $S_\alpha$ and $f^{s_\gamma}_p$
belongs to ${\mathcal A}^{s_\gamma*X^F}$; if not, then again set
$\gamma_0=\gamma$. Now $p$ \emph{exactly codes} $s$ if $s$ equals one
of the $s_\gamma$, $\gamma\le\gamma_0$ and $p$ \emph{codes} $s$ is an
initial segment of some $s_\gamma$, $\gamma\le\gamma_0$.

Finally we define the desired forcing. Let $\Card'$ denote the class
of uncountable limit cardinals. Also fix an extender ultrapower embedding
$j:V=K[\vec x]\to M=K^*[\vec x^*]$ witnessing that $\kappa$
is $H(\kappa^{+3})$-strong in $K[\vec x]$.
I.e., $j$ has critical point
$\kappa$, $H(\kappa^{+3})$ of $V$ is contained in $M$ and every
element of $M$ is of the form $j(f)(\alpha)$ for some
$f:\kappa\to V$ in $V$ and $\alpha<\kappa^{+3}$.

\emph{The conditions.} A condition in $\mathbb{P}$ is a sequence
$p=((p_\alpha,p_\alpha^*): \alpha\in \Card$, $\alpha\le\alpha(p))$
where $\alpha(p)\le\kappa^{+3}$ in $\Card$ and:
\\
(1) $p_{\alpha(p)}$ belongs to $S_{\alpha(p)}$ and
$p_{\alpha(p)}^*=\emptyset$.\\
(2) For $\alpha\in\Card\cap\alpha(p)$, $(p_\alpha,p_\alpha^*)$ belongs
to $R^{p_{\alpha^+}}$.\\
(3) For $\alpha\in\Card'$, $\alpha\le\alpha(p)$,
$p\upharpoonright\alpha$ belongs to ${\mathcal A}^{p_\alpha}$ and exactly
codes $p_\alpha$.\\
(4) For $\alpha\in\Card'$, $\alpha\le\alpha(p)$, if $\alpha$ is
inaccessible in ${\mathcal A}^{p_\alpha}$ then there exists a closed
unbounded subset $C$ of $\alpha$, $C\in {\mathcal A}^{p_\alpha}$, such
that for $\beta\in C$, $p^*_\beta=p^*_{\beta^+}=p^*_{\beta^{++}}=
p_{\beta^+}=p_{\beta^{++}}=\emptyset$.

Conditions are ordered by: $p\leq q$ iff:\\
(a) $\alpha(p)\geq \alpha(q)$.\\
(b) $p(\alpha)\le q(\alpha)$ in $R^{p_{\alpha^+}}$ for
$\alpha\in\Card\cap\alpha(p)\cap(\alpha(q)+1)$.\\
(c) $p_{\alpha(p)}$ extends $q_{\alpha(q)}$ if
$\alpha(p)=\alpha(q)$.\\
(d) If $\alpha(q)\geq \kappa^{++}$,
$|q_{\kappa^{++}}|\leq \gamma< |p_{\kappa^{++}}|$,
$\xi< |j(q)_{\kappa^{+3}}|$ is of the
form $j(f)(i)$ for some $i<|q_{\kappa^{++}}|$ and function $f$ with
domain $\kappa$,
$j(q)_{\kappa^{+3}}(\xi)=0$
and
$\gamma$
belongs to $b^{j(q)_{\kappa^{+3}}\upharpoonright\xi}$ (as defined in
$K^*[\vec x^*]$, the ultrapower of $K[\vec x]$ by $j$)
then $p_{\kappa^{++}}(\gamma^B)=0$.

Clause (d) is to ensure that $G_{\kappa^{++}}$, the subset of
$\kappa^{+3}$ added by the generic $G$, codes the union of the
$j(p)_{\kappa^{+3}}$ for $p$ in $G$, a fact needed for the
preservation of $H(\kappa^{+3})$-strength (see below).

This completes the definition of $\mathbb{P}$. The verification of cofinality
and $GCH$ preservation for $\mathbb{P}$ is as in \cite{my-book}, section 4.2,
following the proofs of the Lemmas 4.3 -- 4.6 found there.
Here we only point out the added points to be made, taking into
account that we are coding $\vec x$ over $K=L[E]$ and not over $L$.
For this verification, requirement (4) above can be weakened to only
require that $p^*_\beta=\emptyset$ for $\beta \in C$; the stronger form
of (4) above is needed for the preservation of $H(\kappa^{+3})$-strength.

A general fact that is needed throughout the proof is the following.

\begin{lemma} (Condensation)
Suppose that $\alpha$ is an uncountable cardinal, $s\in S_\alpha$,
$i<\alpha$ and as before let $H^s(i)$ denote the $\Sigma_1$ Skolem
hull of $i\cup\{\vec x(\le\alpha),s\}$ in ${\mathcal A}^s$.\\
(a) If $\alpha$ is a successor cardinal then for sufficiently large
$i<\alpha$, if $i$ is a limit point of $\{j<\alpha: j=H^s(j)\cap
\alpha \}$ then the transitive collapse of $H^s(i)$ is of the form $\bar
K[\vec {\bar x}]$ where $\bar K$ is an initial segment of $K$.\\
(b) If $\alpha$ is a limit cardinal then for sufficiently large
cardinals $i<\alpha$ the transitive collapse of $H^s(i)$ is of the form $\bar
K[\vec {\bar x}]$ where $\bar K$ is an initial segment of $K$.\\
The same holds with ${\mathcal A}^s$ replaced by any of its initial
segments which contain $s$ and have height equal to a $\ZF^-$ ordinal.
\end{lemma}

\begin{proof}
Recall that $s$ belongs to ${\mathcal A}^s=L_{\mu^{|s|}}[E,\vec
x(\leq\alpha)]$. Now $x(\leq\alpha)$ is generic over $K$ for the
product ${\mathbb P}_{S(\leq\alpha)}$
of Prikry forcings at $\beta\leq \alpha$ in $S$. If $\alpha$
is in the closure of $S$ then the intersection of ${\mathbb
  P}_{S(\leq\alpha)}$ with $L_\mu[E]$ is a class forcing in
$L_\mu[E]$ whenever $\mu$ is a $\ZF^-$ ordinal of size $\alpha$ such that
$\alpha$ is the largest cardinal in $L_\mu[E]$.
Nevertheless, all definable antichains in this
forcing are sets. An examination of the proof of Lemma \ref{fuchs} in
\cite{fuchs} reveals that any sequence which satisfies the geometric
property of that lemma with respect to $L_\mu[E]$ for the forcing
${\mathbb P}_{S(\leq\alpha)}\cap L_\mu[E]$ is in fact generic for this
forcing over $L_\mu[E]$. It follows that $x(\leq\alpha)$, which
satisfies the geometric property with respect to the entire $L[E]$, is generic
over $L_\mu[E]$ for this forcing. From this we infer the $\Sigma_1$
definability of the forcing relation for $\Delta_0$ formulas for the
forcing ${\mathbb P}_{S(\leq\alpha)}\cap L_{\mu^s}[E]$ and therefore
that for $i\leq\alpha$, $H^s_0(i)=$ the $\Sigma_1$ Skolem hull of $i\cup\{\dot s\}$ in
${\mathcal A}^s_0$ ($=L_{\mu^{|s|}}[E]$) is equal to the intersection with
${\mathcal A}^s_0$ of $H^s(i)=$ the $\Sigma_1$ Skolem hull of $i\cup\{s\}$ in
${\mathcal A}^s$ (where $\dot s$ is a name for $s\in{\mathcal A}^s$).
In particular, setting $i$ equal to $\alpha$, we see that ${\mathcal A}^s_0$
is $\Sigma_1$-projectible to $\alpha$ with parameter $\dot s$.

If $i$
satisfies the requirements stated in (a) or (b) above, then the
$\Sigma_1$ projectum of the transitive collapse of $H^s_0(i)$ is equal
to $i$ and if $i$ is sufficiently large, then this transitive collapse
is also sound. It follows that $\bar K=$ the transitive collapse of $H^s_0(i)$
is an initial segment of $K$ for such $i$. The last statement of the
lemma follows by the same argument, as any initial segment of ${\mathcal
  A}^s$ which contains $s$ is $\Sigma_1$ projectible to $\alpha$ with
parameter $s$.
\end{proof}

Using Condensation as above, the proofs of Lemmas 4.3 -- 4.6 from
\cite{my-book}, section 4.2 can be carried out in the present
setting:

In Lemma 4.3, one must take the $\alpha_i$'s to enumerate the
first $\alpha$ sufficiently large elements of $\{\beta <\alpha^+:
\beta$ is a limit of $\bar\beta$ such that $\bar\beta=\alpha^+\cap
\Sigma_1$ Skolem hull of $(\bar\beta\cup\{x\})$ in $\mathcal A\}$ which
are sufficiently large so that Condensation (a) guarantees that the
transitive collapse of the associated $\Sigma_1$ hull is of the form
$\bar K[\vec{\bar x}]$ with $\bar K$ an initial segment of $K$. This
facilitates the proof of the Claim in the proof of Lemma 4.3

In Lemma 4.4 one applies Condensation (b) to ensure that the $\Sigma_1$
Skolem hull $H_\beta$, when $\beta=\alpha\cap H_\beta$, transitively
collapses to a structure built from an initial segment of $K$ for
sufficiently large cardinals $\beta<\alpha$; this is needed to argue
that the resulting $s_\beta$ is a string at $\beta$. The rest of the
proof remains unchanged.

The proof of Lemma 4.5 (a) in the case of $\beta$ inaccessible also
uses Condensation (b) in the proof of the Claim, to verify that the
$p^\lambda_\gamma$ are strings (in $S_\gamma$). Also note that
Jensen's subtle use of the assumption that $0^\#$ does not exist
(referred to in the Note) has no counterpart here, as our
structures ${\mathcal A}_0^s=L_{\mu^s}[E]$, $s\in S_\alpha$ collapse $|s|$ to
$\alpha$ without the use of $s$ as an additional predicate (indeed,
$s$ is just a parameter in $L_{\mu^s}[E,\vec x(\le\alpha)]$). The
proofs of Lemma 4.5 in the case of singular $\beta$ as well as Lemma
4.6 can be carried out as before.

We are left with the verification that $\kappa$ remains
$H(\kappa^{+3})$-strong after forcing with $\mathbb{P}$. Recall that $j:V=K[\vec x]\to
M=K^*[\vec x^*]$ is the extender ultrapower embedding witnessing that
$\kappa$ is $H(\kappa^{+3})$-strong.
Let $G$ be $\mathbb{P}-$generic over $V$; in $V[G]$ we must produce a $G^M$ which is
$j(\mathbb{P})$-generic over $M$ and which contains $j(p)$ for each $p$ in $G$.

If $(D_i:i < \kappa)$ are dense subsets of $\mathbb{P}$ and $p$ belongs to
$\mathbb{P}$ then $p$ has an extension $q$ which ``reduces each $D_i$ below
$i^{+3}$'', i.e., any extension $r$ of $q$ can be further extended to
meet $D_i$ without changing $r(\beta)$ for $\beta\ge i^{+3}$. (This
is a variant of $\Delta$-distributivity, see page 30 of
\cite{my-book}.) From this it follows that if we take the upward
closure of $j[G]$, we obtain a compatible set of conditions which
reduces each dense subset of $j(\mathbb{P})$ in $M$ below $\kappa^{+3}$, using
the ultrapower representation of $M$. Moreover, thanks to requirement
(4) in the definition of $\mathbb{P}$, $j[G]$ contains no nontrivial information
between $\kappa$ and $\kappa^{+3}$ (except for $G_\kappa$, the subset
of $\kappa^+$ added by $G$), and therefore $j[G]$ is compatible with
$G\cap H(\kappa^{+3})$. Moreover, thanks to condition (d) in the
definition of extension of conditions, $G_{\kappa^{++}}$ will code the
union of the $j(p)_{\kappa^{+3}}$, $p\in G$, and this coding is
generic (using the fact that the $j(p)_{\kappa^{+3}}$ belong to ${\mathcal A}^\emptyset$;
see Lemma 4.8 of \cite{my-book}).
So we can take $G^M$ to be generated by the joins of
conditions in $j[G]$ with those in $G\cap H(\kappa^{+3})$ to obtain the desired
$j(\mathbb{P})$-generic over $M$.
\end{proof}

\subsection{Killing the $GCH$ everywhere by a cardinal preserving forcing}

In \cite{merimovich} the following is proved.
\begin{theorem} (Merimovich \cite{merimovich})\label{merimovich} Suppose that $GCH$ holds and $\kappa$ is $H(\kappa^{+4})-$
strong. Then there exists a  generic extension of the universe in which  $\kappa$ remains inaccessible and $\forall \lambda \leq \kappa, 2^{\lambda}=\lambda^{+3}.$
\end{theorem}

Unfortunately in the Merimovich model a lot of cardinals are collapsed below $\kappa.$ We show that a simple modification of his proof can give us the the total failure of the $GCH$ below $\kappa$ without collapsing any cardinals.
\begin{theorem} Suppose that $GCH$ holds and $\kappa$ is $H(\kappa^{+3})-$
strong. Then there exists a  cardinal preserving generic extension of the universe in which  $\kappa$ remains inaccessible and $\forall \lambda \leq \kappa, 2^{\lambda} > \lambda^+.$
\end{theorem}
\begin{remark}
In fact it suffices to have a Mitchell increasing sequence of extenders of length $\kappa^{+}$, each of them $(\kappa+2)-$strong.
Thus the exact strength that we need  is a measurable cardinal $\kappa$ with
$o(\kappa)=\kappa^{++}+\kappa^{+}.$
\end{remark}

The idea behind the proof is simple. We consider Merimovich's proof of
Theorem \ref{merimovich} and  replace the collapsing functions introduced in his
proof by suitable Cohen forcings for adding many new sets. We also
need to replace the Cohen forcings used in the proof of Theorem \ref{merimovich}
by new ones because of our weaker assumption. As a result we will get
a model in which we have $2^\lambda = \lambda^{++}$ for a club of
cardinals $\lambda$ below $\kappa$.
As requested by the referee, we now provide the details.

\emph{Extender Sequences}

Suppose $j: V^{*} \rightarrow M^{*} \supseteq V_{\lambda}^{*},
crit(j)=\kappa.$ Define an extender sequence (with projections)

\begin{center}
$E(0)= \langle \langle E_{\alpha}(0): \alpha \in \emph{A} \rangle,
\langle \pi_{\beta, \alpha}: \beta, \alpha \in \emph{A}, \beta
\geq_{j} \alpha \rangle \rangle$
\end{center}

on $\kappa$ by:

\begin{itemize}
\item $\emph{A}=[\kappa, \lambda),$ \item $\forall \alpha \in \emph{A}, E_{\alpha}(0)$ is the $\kappa-$complete ultrafilter on $\kappa$ defined by

    \begin{center}
    $X \in E_{\alpha}(0) \Leftrightarrow \alpha \in j(X)$
    \end{center}
We write $E_\alpha(0)$ as $U_\alpha$.
    \item $\forall \alpha, \beta \in \emph{A}$
     \begin{center}
     $\beta \geq_{j} \alpha \Leftrightarrow \beta \geq \alpha$ and for some $ f \in$$ ^{\kappa} \kappa,$ $  j(f)(\beta)=\alpha$
     \end{center}
     \item $\beta \geq_{j} \alpha \Rightarrow \pi_{\beta, \alpha}: \kappa \rightarrow \kappa$ is such that $j(\pi_{\beta, \alpha})(\beta)=\alpha$

\end{itemize}

For the basic properties $E(0)$ we refer to \cite{gitik} where it is called ``nice system'' there.

Now suppose that we have defined the sequence $\langle E(\tau'): \tau' < \tau  \rangle$. If $\langle E(\tau'): \tau' < \tau  \rangle \notin M^*$ we stop the construction and set

\begin{center}
$\forall \alpha \in \emph{A}, \bar{E}_{\alpha}= \langle \alpha, E(0), ..., E(\tau'), ...: \tau'< \tau \rangle$
\end{center}
and call $\bar{E}_{\alpha}$ an extender sequence of length $\tau$  $(\len(
\Es_{\alpha})=\tau).$

If $\langle E(\tau'): \tau' < \tau  \rangle \in M^*$  then we define
an extender sequence (with projections)

\begin{center}
$E(\tau)= \langle \langle E_{\langle \alpha, E(\tau'): \tau'< \tau \rangle}(\tau): \alpha \in \emph{A} \rangle, \langle \pi_{\langle \beta, E(\tau'): \tau'< \tau \rangle, \langle \alpha, E(\tau'): \tau'< \tau \rangle}: \beta, \alpha \in \emph{A}, \beta \geq_{j} \alpha \rangle \rangle$
\end{center}
on $V_{\kappa}$ by:

\begin{itemize}
\item $X \in E_{\langle \alpha, E(\tau'): \tau'< \tau \rangle}(\tau) \Leftrightarrow \langle \alpha, E(\tau'): \tau'< \tau \rangle \in j(X),$
\item for $\beta \geq_{j} \alpha$ in $\emph{A}, \pi_{\langle \beta, E(\tau'): \tau'< \tau \rangle, \langle \alpha, E(\tau'): \tau'< \tau \rangle}(\langle \nu, d \rangle)= \langle \pi_{\beta, \alpha}(\nu), d \rangle $
\end{itemize}

Note that $ E_{\langle \alpha, E(\tau'): \tau'< \tau \rangle}(\tau)$ concentrates on pairs of the form $\langle \nu, d \rangle$ where $\nu < \kappa$ and $d$ is an extender sequence. This makes the above definition well-defined.

We let the construction run until it stops due to the extender
sequence not being in $M^*$.

\begin{definition}
\begin{enumerate}
\item $\bar{\mu}$ is an extender sequence if there are $j: V^{*} \rightarrow M^{*}$ and $\bar{\nu}$ such that $\bar{\nu}$ is an extender sequence derived from $j$ as above (i.e $\bar{\nu}=\bar{E_{\alpha}}$ for some $\alpha$) and $\bar{\mu}=\bar{\nu}\upharpoonright \tau$ for some $\tau \leq \len(\bar{\nu}),$

\item $\kappa(\bar{\mu})$ is the ordinal of the beginning of the sequence (i.e $\kappa(\bar{E}_{\alpha})=\alpha$),

\item $\kappa^{0}(\bar{\mu})=(\kappa(\bar{\mu}))^{0}$ (i.e $\kappa^{0}(\bar{E}_{\alpha})= \kappa)$),

\item The sequence $ \langle \bar{\mu_{1}}, ..., \bar{\mu_{n}} \rangle$ of extender sequences is $^{0}-$increasing if $\kappa^{0}(\bar{\mu_1}) < ... < \kappa^{0}(\bar{\mu_n}),$

\item The extender sequence $\bar{\mu}$ is permitted to a $^{0}-$increasing sequence $ \langle \bar{\mu_{1}}, ..., \bar{\mu_{n}} \rangle$ of extender sequences if $\kappa^{0}(\bar{\mu_n})<\kappa^{0}(\bar{\mu}),$

\item $X \in \bar{E}_{\alpha} \Leftrightarrow \forall \xi < \len(\bar{E}_{\alpha}), X \in E_{\alpha}(\xi),$

\item $\bar{E}= \langle \bar{E}_{\alpha}: \alpha \in A  \rangle$ is an extender sequence system if there is $j: V^{*} \rightarrow M^{*}$ such that each $\bar{E}_{\alpha}$ is derived from $j$ as above and $\forall \alpha, \beta \in \emph{A}, \len(\bar{E}_{\alpha})= \len(\bar{E}_{\beta}).$ Call this common length, the length of $\bar{E}, \len(\bar{E}),$

\item For an extender sequence $\bar{\mu},$ we use $\bar{E}(\bar{\mu})$ for the extender sequence system containing $\bar{\mu}$ (i.e $\bar{E}(\bar{E}_{\alpha})= \bar{E}$),

\item $\dom(\bar{E})=A$,

\item $\bar{E}_{\beta}  \geq_{\bar{E}} \bar{E}_{\alpha} \Leftrightarrow \beta  \geq_{j} \alpha.$
\end{enumerate}
\end{definition}

\emph{Finding generic filters}

  Start with $GCH$ and construct  an extender sequence system $\bar{E}= \langle \bar{E}_{\alpha}: \alpha \in \dom\bar{E} \rangle$ where $\dom\bar{E}=[\kappa, \kappa^{++})$ and $\len(\bar{E})=\kappa^{+}$ such that $j_{\bar{E}}:V^{*}\rightarrow M_{\bar{E}}^{*}\supseteq V_{\kappa^{++}}^{*}.$ We may suppose that $\bar{E}$ is derived from an elementary embedding $j: V^{*} \rightarrow M^{*}.$   Consider the following elementary embeddings $\forall \tau' < \tau < \len(\bar{E})$
\begin{align*} \label{E-system}
& j_\gt\func  \VS \to \MSt \simeq \Ult(\VS, E(\gt)),
\notag \\
&  k_\gt(j_\gt(f)(\Es_\ga \restricted \gt))=
        j(f)(\Es_\ga \restricted \gt),
\\
\notag & i_{\gt', \gt}(j_{\gt'}(f)(\Es_\ga \restricted \gt')) =
    j_\gt(f)(\Es_\ga \restricted \gt'),
\\
\notag & \ordered{\MSE,i_{\gt, \Es}} = \limdir \ordered {
        \ordof{\MSt} {\gt < \len(\Es)},
                \ordof{i_{\gt',\gt}} {\gt' \leq \gt < \len(\Es)}
        }.
\end{align*}
We restrict $\len(\Es)$ by demanding
$\forall \gt < \len(\Es)$
        $\Es \restricted \gt \in \MSt$.

Thus we get the following commutative diagram.

\begin{align*}
\begin{diagram}
\node{\VS}
        \arrow[3]{e,t}{j}
        \arrow{sse,l}{j_{\gt'}}
        \arrow[2]{se,l}{j_\gt}
        \arrow{seee,t,1}{j_{\Es}}
    \node{}
    \node{}
    \node{\MS}
\\
\node{}
    \node{}
    \node{}
        \node{\MSE}
        \arrow{n,r}{k_{\Es}}
\\
\node{}
    \node{M^*_{\gt'}}
        \arrow[2]{ne,t,3}{k_{\gt'}}
        \arrow{nee,t,2}{i_{\gt', \Es}}
        \arrow{e,b}{i_{\gt', \gt}}
    \node{\MSt = \Ult(\VS, E(\gt))}
        \arrow[1]{ne,b}{i_{\gt, \Es}}
        \arrow{nne,b,1}{k_{\gt}}
\end{diagram}
\end{align*}

Note that
\begin{itemize}
 \item the critical point of elementary embeddings originating in $V^*$ is $\kappa,$  \item the critical point of elementary embeddings originating in other models is $\kappa^{+3}$ as computed in that model.
\end{itemize}
Thus we get

\begin{align*}
& \crit i_{\gt',\gt} = \crit k_{\gt'} = \crit i_{\gt',\Es} =
         (\gk^{+3})_{M^*_{\gt'}},
\\
& \crit k_{\gt} = \crit(i_{\gt, \Es}) = (\gk^{+3})_{M^*_{\gt}},
\\
& \crit k_\Es = (\gk^{+3})_{\MSE}.
\end{align*}
Each of these models catches $V_{\gk+2}^{\MS} = \VS_{\gk+2}$ hence compute
$\gk^{++}$ to be the same ordinal in all models. The larger $\gt$
is the more resemblence there is between $\MSt$ and $\MS$, and hence with $\VS$
towards $\VS_{\gk+3}$. This can be observed by noting that
\begin{center}
 $ \kappa^{+3}_{M_{\tau^{'}}^{*}} < j_{\tau^{'}}(\kappa) < \kappa^{+3}_{M_{\tau}^{*}} < j_{\tau}(\kappa) < \kappa^{+3}_{M_{\bar{E}^{*}}} \leq \kappa^{+3}_{M^{*}} \leq \kappa^{+3}.$
\end{center}
We also factor through the normal ultrafilter to get the following commutative diagram

\begin{align*}
\begin{aligned}
\begin{diagram}
\node{\VS}
        \arrow{e,t}{j_\Es}
        \arrow{se,t}{j_\gt}
        \arrow{s,l}{i_U}
        \node{\MSE}
\\
\node{\NS \simeq \Ult(\VS, U)}
         \arrow{e,b}{i_{U, \gt}}
         \arrow{ne,b}{i_{U, \Es}}
        \node{\MSt}
         \arrow{n,b}{i_{\gt, \Es}}
\end{diagram}
\end{aligned}
\begin{aligned}
\qquad
\begin{split}
& U = E_\gk(0),
\\
& i_U \func  \VS \to \NS \simeq \Ult(\VS, U),
\\
& i_{U, \gt}(i_U(f)(\gk)) = j_\gt(f)(\gk),
\\
& i_{U, \Es}(i_U(f)(\gk)) = j_\Es(f)(\gk).
\end{split}
\end{aligned}
\end{align*}
$\NS$ catches $\VS$ only up to $\VS_{\gk+1}$ and we have
\begin{align*}
\gk^+ < \crit i_{U, \gt} = \crit i_{U, \Es}
    = \gk^{++}_{\NS} < i_U(\gk) < \gk^{++}.
\end{align*}

Let $\langle \PP_{\nu}, \lusim{\QQ_{\nu}} : \nu \leq \kappa     \rangle$ be the reverse Easton iteration such that for any $\nu \leq \kappa:$
\begin{itemize}
\item If $\nu$ is accessible then $\vdash_{\nu} \ulcorner \lusim{\QQ_{\nu}}$ is the trivial forcing$\urcorner$,
\item If $\nu$ is inaccessible then $\vdash_{\nu} \ulcorner \lusim{\QQ_{\nu}}=\lusim{\Add}(\nu^+, \nu^{+3})\times \lusim{\Add}(\nu^{++}, \nu^{+4})\times \lusim{\Add}(\nu^{+3}, \nu^{+5}).$
\end{itemize}
Then we can obtain the following lifting diagram for some suitable $\PP_{\kappa}*\lusim{\QQ_{\kappa}}-$generic filter $G_{<\kappa}*H$:

\begin{align*}
\begin{diagram}
\node{V = \VS[G_{\upto \gk}][H]}
		\arrow[2]{e,t}{j_\Es}
		\arrow{s,l}{i_{U}}
		\arrow{se,b}{j_{\gt'}}
		\arrow{see,b}{j_\gt}
	\node{}
        \node{\ME = \MSE[G^\Es][H^\Es]}
\\
	\node{N = \NS[G^U][H^U]}
		 \arrow{e,b}{i_{U, \gt'}}
	\node{M_{\gt'} = M^*_{\gt'}[G^{\gt'}][H^{\gt'}]}
		 \arrow{ne,t,3}{i_{\gt', \Es}}
		 \arrow{e,b}{i_{\gt', \gt}}
	\node{\Mt = \MSt[G^\gt][H^\gt]}
		\arrow[1]{n,b}{i_{\gt, \Es}}
\end{diagram}
\end{align*}

Set

\begin{align*}
& R_U =(\Add(\kappa^{+4}, i_{U}(\kappa^{++})))_{N^*[G_{<\kappa}]},
\\
& R_\tau =(\Add(\kappa^{+4}, i_{\tau}(\kappa^{++})))_{M_{\tau}^*[G_{\tau}]},
\\
& R_{\bar{E}} =(\Add(\kappa^{+4}, i_{\bar{E}}(\kappa^{++})))_{M_{\bar{E}}^*[G_{\bar{E}}]}
\end{align*}

\begin{lemma}
In $V$ there are $I_{U}, I_{\tau}$ and $I_{\bar{E}}$ such that:

$(a)$ $I_{U}$ is $R_{U}-$generic over $N$,

$(b)$ $I_{\tau}$ is $R_{\tau}-$generic over $M_{\tau}$,

$(c)$ $I_{\bar{E}}$ is $R_{\bar{E}}-$generic over $M_{\bar{E}}$,

$(d)$ The generics are so that we have the following lifting diagram

\begin{align*}
\begin{diagram}
\node{}
    \node{}
        \node{\ME[I_\Es]}
\\
    \node{N[I_U]}
         \arrow{e,b}{i^*_{U, \gt'}}
    \node{M_{\gt'}[I_{\gt'}]}
         \arrow{ne,t,3}{i^*_{\gt', \Es}}
         \arrow{e,b}{i^*_{\gt', \gt}}
    \node{\Mt[I_\gt]}
        \arrow[1]{n,b}{i^*_{\gt, \Es}}
\end{diagram}
\end{align*}
\end{lemma}

Let $i^2_U$ be the iterate of $i_U$. We choose a function, $R(-,-)$, such that
\begin{align*}
& R_U = i^2_U(R)(\gk, i_U(\gk)).
\end{align*}

 The following lemma gives us everything that we need about the model $N[I_{U}]$.
\begin{lemma}
$(a)$  $N[I_{U}]$ and $N$ have the same cardinals,

$(b)$ The power function in $N[I_{U}]$ differs from the power function of $N$ at the following point: $ 2^{\kappa^{+4}}=i_{U}(\kappa)^{++}.$
\end{lemma}

Also The following lifting says everything which we can possibly say about the models  $M_{\tau}[I_{\tau}]$ and $M_{\bar{E}}[I_{\bar{E}}]$.

\begin{align*}
\begin{diagram}
\node{}
	\node{}
        \node{\ME[I_\Es]}
\\
	\node{N[I_U]}
		 \arrow{e,b}{i^*_{U, \gt'}}
	\node{M_{\gt'}[I_{\gt'}]}
		 \arrow{ne,t,3}{i^*_{\gt', \Es}}
		 \arrow{e,b}{i^*_{\gt', \gt}}
	\node{\Mt[I_\gt]}
		\arrow[1]{n,b}{i^*_{\gt, \Es}}
\end{diagram}
\end{align*}
The forcing notion  $\PE$ we define later, due to Merimovich, adds a club to $\gk$.
For each $\gn_1, \gn_2$ successive points in the club
the cardinal structure and power function in the range $[\gn_1^+, \gn_2^{++}]$
of the generic extension
is the same as the cardinal structure and power function in the range
$[\gk^+, j_\Es(\gk)^{++}]$ of $\ME[I_\Es]$.

\emph{Redefining extender Sequences}

 We define a new extender sequence system $\bar{F}= \langle \bar{F}_{\alpha}: \alpha \in \dom(\bar{F})\rangle$ by:
\begin{itemize}
  \item $\dom(\bar{F})=\dom(\bar{E}),$ \item $\len(\bar{F})=\len(\bar{E})$ \item $\leq_{\bar{F}}=\leq_{\bar{E}},$ \item $F(0)=E(0),$ \item $I(\tau)=I_{\tau},$ \item $\forall 0< \tau < \len(\bar{F}), F(\tau)= \langle \langle F_{\alpha}(\tau): \alpha \in \dom(\bar{F}) \rangle, \langle \pi_{\beta, \alpha}: \beta, \alpha \in \dom(\bar{F}), \beta \geq_{\bar{F}} \alpha \rangle \rangle$  is such that
    \begin{center}
    $X \in F_{\alpha}(\tau) \Leftrightarrow \langle \alpha, F(0), I(0), ..., F(\tau^{'}), I(\tau^{'}), ...: \tau^{'}  < \tau \rangle \in j_{\bar{E}}(X),$
    \end{center}
and
\begin{center}
$\pi_{\beta, \alpha}(\langle \xi, d \rangle)= \langle \pi_{\beta, \alpha}(\xi), d \rangle, $
\end{center}
\item $\forall \alpha \in \dom(\bar{F}), \bar{F}_{\alpha}= \langle \alpha, F(\tau),I(\tau): \tau < \len(\bar{F})  \rangle.$
\end{itemize}
Also let $I(\bar{F})$ be the filter generated by $\bigcup_{\tau < \len(\bar{F})} i_{\tau, \bar{E}}^{''}I(\tau).$ Then $I(\bar{F})$ is $\mathbb{R}_{\bar{E}}-$generic over $M_{\bar{E}}.$

From now on we work with this new definition of extender sequence
system and use $\bar{E}$ to denote it.

\begin{definition}$(1)$ $T \in \bar{E}_{\alpha} \Leftrightarrow \forall \xi < \len(\bar{E}_{\alpha}), T \in E_{\alpha}(\xi),$

$(2)$ $T \backslash \bar{\nu} = T \backslash V_{\kappa^{0}(\bar{\nu})}^{*},$

$(3)$ $T \upharpoonright \bar{\nu}= T \cap V_{\kappa^{0}(\bar{\nu})}^{*}.$

\end{definition}

\emph{Definition of the forcing notion $\mathbb{P}_{\bar{E}}$}

 This forcing notion is  the forcing notion of \cite{merimovich}. We give it in detail for completeness. First we define a forcing notion $\mathbb{P}_{\bar{E}}^{*}.$

\begin{definition}
A condition $p$ in $\mathbb{P}_{\bar{E}}^{*}$ is of the form
\begin{center}
$p = \{ \langle \bar{\gamma}, p^{\bar{\gamma}}\rangle: \bar{\gamma} \in s \} \cup \{\langle \bar{E}_{\alpha}, T, f, F \rangle  \}$
\end{center}
where
\begin{enumerate}
\item $s \in [\bar{E}]^{\leq \kappa}, \min\bar{E}= \bar{E}_{\kappa} \in s,$

 \item$p^{\Es_{\kappa}} \in V_{\kappa^{0}(\bar{E})}^{*}$ is an extender sequence such that $\kappa(p^{\bar{E}_{\kappa}})$ is inaccessible ( we allow $p^{\bar{E}_{\kappa}}= \emptyset$). Write $p^0$ for $p^{\bar{E}_{\kappa}}.$

\item $\forall \bar{\gamma} \in s \backslash \{ \min(s) \}, p^{\bar{\gamma}} \in [V_{\kappa^{0}(\bar{E})}^{*}] ^{< \omega}$ is a $^{0}$-increasing sequence of extender sequences and $\max\kappa(p^{\bar{\gamma}})$ is inaccessible,

\item $\forall \bar{\gamma} \in s, \kappa(p^{0}) \leq \max\kappa(p^{\bar{\gamma}})$,

\item $\forall \bar{\gamma} \in s, \bar{E}_{\alpha} \geq \bar{\gamma},$

\item $T \in \bar{E}_{\alpha},$

\item $\forall \bar{\nu} \in T, \mid \{ \bar{\gamma} \in s: \bar{\nu}$ is permitted to $p^{\bar{\gamma}} \} \mid \leq \kappa^{0}(\bar{\nu}),$

\item $\forall \bar{\beta}, \bar{\gamma} \in s, \forall \bar{\nu} \in T,$ if $\bar{\beta} \neq \bar{\gamma}$ and $\bar{\nu}$ is permitted to $p^{\bar{\beta}}, p^{\bar{\gamma}},$ then $\pi_{\bar{E}_{\alpha}, \bar{\beta}}(\bar{\nu}) \neq \pi_{\bar{E}_{\alpha}, \bar{\gamma}}(\bar{\nu}),$

\item $f$ is a function such that

$\hspace{.5cm}$ $(9.1)$ $\dom(f)= \{\bar{\nu} \in T: \len(\bar{\nu})=0 \},$

$\hspace{.5cm}$ $(9.2)$ $f(\nu_{1}) \in R(\kappa(p^{0}), \nu_{1}^{0}).$ If $p^{0}=\emptyset,$ then $f(\nu_{1})= \emptyset,$

\item $F$ is a function such that

$\hspace{.5cm}$ $(10.1)$ $\dom(F)= \{ \langle \bar{\nu_{1}}, \bar{\nu_{2}} \rangle \in T^{2}:\len(\bar{\nu_{1}})=\len(\bar{\nu_{2}})= \emptyset \},$

$\hspace{.5cm}$ $(10.2)$ $F(\nu_{1}, \nu_{2}) \in R(\nu_{1}^{0}, \nu_{2}^{0}),$

$\hspace{.5cm}$ $(10.3)$ $j_{\bar{E}}^{2}(\alpha, j_{\bar{E}}(\alpha)) \in I(\bar{E})$.
\end{enumerate}
\end{definition}
We write $\mc(p), \supp(p), T^{p}, f^{p}$ and $F^{p}$ for $\bar{E}_{\alpha}, s, T, f$ and $F$ respectively.
\begin{definition}
For $p, q \in \mathbb{P}_{\bar{E}}^{*},$ we say $p$ is a Prikry extension of $q$ ($p \leq^{*} q$ or $p \leq^{0} q$) iff
\begin{enumerate}
\item $\supp(p) \supseteq \supp(q),$

\item  $\forall \bar{\gamma} \in \supp(q), p^{\bar{\gamma}}=q^{\bar{\gamma}},$

\item  $\mc(p) \geq_{\bar{E}} \mc(q),$

\item  $\mc(p) >_{\bar{E}} \mc(q) \Rightarrow \mc(q) \in \supp(p),$

\item  $\forall \bar{\gamma} \in \supp(p) \backslash \supp(q), \max\kappa^{0}(p^{\bar{\gamma}}) > \bigcup \bigcup j_{\bar{E}}(f^{q})(\kappa(\mc(q))),$

\item  $T^{p} \leq \pi_{\mc(p), \mc(q)}^{-1''}T^{q},$

\item  $\forall \bar{\gamma} \in \supp(q), \forall \bar{\nu} \in T^{p},$ if $\bar{\nu}$ is permitted to $p^{\bar{\gamma}},$ then
\begin{center}
$\pi_{\mc(p), \bar{\gamma}}(\bar{\nu})=\pi_{\mc(q), \bar{\gamma}}(\pi_{\mc(p), \mc(q)}(\bar{\nu})),$
\end{center}

\item  $\forall \nu_{1} \in \dom(f^{p}), f^{p}(\nu_{1}) \leq f^{q}\circ\pi_{\mc(p), \mc(q)}(\nu_{1}),$

\item  $\forall \langle \nu_{1}, \nu_{2} \rangle \in \dom(F^{p}), F^{p}(\nu_{1}, \nu_{2}) \leq F^{q}\circ \pi_{\mc(p), \mc(q)}(\nu_{1}, \nu_{2}).$
\end{enumerate}

\end{definition}

We are now ready to define the forcing notion $\mathbb{P}_{\bar{E}}.$
\begin{definition}
A condition $p$ in $\mathbb{P}_{\bar{E}}$ is of the form
\begin{center}
$p=p_{n} ^{\frown} ...^{\frown} p_{0} $
\end{center}
where
\begin{itemize}
\item $p_{0} \in \mathbb{P}_{\bar{E}}^{*}, \kappa^{0}(p_{0}^{0}) \geq \kappa^{0}(\bar{\mu}_{1}),$ \item $p_{1} \in \mathbb{P}_{\bar{\mu}_{1}}^{*}, \kappa^{0}(p_{1}^{0}) \geq \kappa^{0}(\bar{\mu}_{2}),$

$\vdots$

    \item $p_{n} \in \mathbb{P}_{\bar{\mu}_{n}}^{*}.$
\end{itemize}
and $\langle \bar{\mu}_{n}, ..., \bar{\mu}_{1}, \bar{E} \rangle$ is a $^{0}-$inceasing sequence of extender sequence systems, that is $\kappa^{0}(\bar{\mu}_{n}) < ... < \kappa^{0}(\bar{\mu}_{1}) < \kappa^{0}(\bar{E}).$
\end{definition}
\begin{definition}
For $p, q \in \mathbb{P}_{\bar{E}},$ we say $p$ is a Prikry extension of $q$ ($p \leq^{*} q$ or $p \leq^{0} q$) iff
\begin{center}
$p=p_{n} ^{\frown} ...^{\frown} p_{0} $

$q=q_{n} ^{\frown} ...^{\frown} q_{0} $
\end{center}
where
\begin{itemize}
\item $p_{0}, q_{0} \in \mathbb{P}_{\bar{E}}^{*}, p_{0} \leq^{*} q_{0},$  \item $p_{1}, q_{1} \in \mathbb{P}_{\bar{\mu}_{1}}^{*}, p_{1} \leq^{*} q_{1},$

   $\vdots$

     \item $p_{n}, q_{n} \in \mathbb{P}_{\bar{\mu}_{n}}^{*}, p_{n} \leq^{*} q_{n}.$
\end{itemize}
\end{definition}
Now let $p \in \mathbb{P}_{\bar{E}}$ and $\bar{\nu} \in T^{p}.$ We define $p_{\langle \bar{\nu} \rangle}$ a one element extension of $p$ by $\bar{\nu}.$
\begin{definition}
Let $p \in \mathbb{P}_{\bar{E}}, \bar{\nu} \in T^{p}$ and $\kappa^{0}(\bar{\nu}) > \bigcup \bigcup j_{\bar{E}}(f^{p, \Col})(\kappa(\mc(p)))$, where $f^{p, \Col}$ is the collapsing part of $f^{p}$. Then $p_{\langle \bar{\nu}\rangle}=p_{1} ^{\frown} p_{0}$ where
\begin{enumerate}
\item $\supp(p_{0})=\supp(p),$

\item $\forall \bar{\gamma} \in \supp(p_{0}),$
$p_{0}^{\bar{\gamma}} = \left\{
\begin{array}{l}
      \pi_{\mc(p), \bar{\gamma}}(\bar{\nu}) \hspace{1.65cm} \text{ if } \bar{\nu} \text{ is permitted to } p^{\bar{\gamma}} \text{ and } \len(\bar{\nu}) >0, \\
       \pi_{\mc(p), \bar{\gamma}}(\bar{\nu}) \hspace{1.65cm} \text{ if } \bar{\nu} \text{ is permitted to } p^{\bar{\gamma}}, \len(\bar{\nu})=0 \text{ and } \bar{\gamma}=\bar{E}_{\kappa},
       \\
       p^{\bar{\gamma} \frown} \langle \pi_{\mc(p), \bar{\gamma}}(\bar{\nu}) \rangle \hspace{.7cm} \text{ if } \bar{\nu} \text{ is permitted to } p^{\bar{\gamma}}, \len(\bar{\nu})=0 \text{ and } \bar{\gamma}\neq \bar{E}_{\kappa},
       \\
       p^{\bar{\gamma}} \hspace{2.95cm} \text{ otherwise }.

     \end{array} \right.$

\item $\mc(p_{0})=\mc(p),$

\item $T^{p_{0}}=T^{p} \backslash \bar{\nu},$

\item $\forall \nu_{1} \in T^{p_{0}}, f^{p_{0}}(\nu_{1})=F^{p}(\kappa(\bar{\nu}), \nu_{1}),$

\item $F^{p_{0}}=F^{p},$

\item if $\len(\bar{\nu})>0$ then

$\hspace{.5cm}$ $(7.1)$ $\supp(p_{1})=\{\pi_{\mc(p), \bar{\gamma}}(\bar{\nu}): \bar{\gamma} \in \supp(p)$ and $\bar{\nu}$ is permitted to $p^{\bar{\gamma}}\},$

$\hspace{.5cm}$ $(7.2)$ $p_{1}^{\pi_{\mc(p), \bar{\gamma}}(\bar{\nu})}=p^{\bar{\gamma}},$

$\hspace{.5cm}$ $(7.3)$ $\mc(p_{1})=\bar{\nu},$

$\hspace{.5cm}$ $(7.4)$ $T^{p_{1}}=T^{p} \upharpoonright \bar{\nu},$

$\hspace{.5cm}$ $(7.5)$ $f^{p_{1}}=f^{p} \upharpoonright \bar{\nu},$

$\hspace{.5cm}$ $(7.6)$ $F^{p_{1}}=F^{p} \upharpoonright \bar{\nu},$

\item if $\len(\bar{\nu})=0$ then

$\hspace{.5cm}$ $(8.1)$ $\supp{p_{1}} =\{ \pi_{\mc(p),0}(\bar{\nu}) \},$

$\hspace{.5cm}$ $(8.2)$ $p_{1}^{\pi_{\mc(p),0}(\bar{\nu})}=p^{\bar{E}_{\kappa}},$

$\hspace{.5cm}$ $(8.3)$ $\mc(p_{1})=\bar{\nu}^{0},$

$\hspace{.5cm}$ $(8.4)$ $T^{p_{1}}= \emptyset,$

$\hspace{.5cm}$ $(8.5)$ $f^{p_{1}}=f^{p}(\kappa(\bar{\nu})),$

$\hspace{.5cm}$ $(8.6)$ $F^{p_{1}}= \emptyset.$
\end{enumerate}
\end{definition}
We use $(p_{\langle \bar{\nu} \rangle})_{0}$ and $(p_{\langle \bar{\nu} \rangle})_{1}$ for $p_{0}$ and $p_{1}$ respectively. We also let $p_{\langle \bar{\nu_{1}}, \bar{\nu_{2}} \rangle }= (p_{\langle \bar{\nu}_{1}\rangle})_{1} ^{\frown} (p_{\langle \bar{\nu}_{1} \rangle})_{0 \langle \bar{\nu_{2}} \rangle}$ and so on.

The above definition is the key step in the definition of the forcing relation $\leq.$
\begin{definition}
For $p, q \in \mathbb{P}_{\bar{E}},$ we say $p$ is a $1-$point extension of $q$ ($p \leq^{1} q$) iff
\begin{center}
$p=p_{n+1} ^{\frown} ...^{\frown} p_{0} $

$q=q_{n} ^{\frown} ...^{\frown} q_{0} $
\end{center}
and there is $0 \leq k \leq n$ such that
\begin{itemize}
\item $\forall i < k, p_{i}, q_{i} \in \mathbb{P}_{\bar{\mu}_{i}}^{*}, p_{i} \leq^{*} q_{i},$ \item $\exists \bar{\nu} \in T^{q_{k}}, (p_{k+1}) ^{\frown}p_{k} \leq^{*} (q_{k})_{ \langle \bar{\nu} \rangle}$ \item $\forall i > k, p_{i+1}, q_{i} \in \mathbb{P}_{\bar{\mu}_{i}}^{*}, p_{i+1} \leq^{*} q_{i},$
\end{itemize}
where $\bar{\mu}_{0}=\bar{E}.$
\end{definition}
\begin{definition}
For $p, q \in \mathbb{P}_{\bar{E}},$ we say $p$ is an $n-$point extension of $q$ ($p \leq^{n} q$) iff there are $p^{n}, ..., p^{0}$ such that
\begin{center}
$p=p^{n} \leq^{1} ... \leq^{1} p^{0}=q.$
\end{center}
\end{definition}
\begin{definition}
For $p, q \in \mathbb{P}_{\bar{E}},$ we say $p$ is an extension of $q$ ($p \leq q$) iff there is some $n$ such that $p \leq^{n} q$.
\end{definition}
Suppose that $H$ is $\mathbb{P}_{\bar{E}}-$generic over $V.$
Then all of the results in \cite{merimovich}, except the following, work as well, :

 Replace Claim 10.3 with:

\begin{claim}
Assume $\len(\bar{E})>0$. Then $\vdash_{\PP_{\bar{E}}} \ulcorner 2^{\kappa}=\kappa^{++} \urcorner$.
\end{claim}

 Replace Claim 10.4 with:

\begin{claim}
Assume $\len(\bar{E})>0$. Then $\vdash_{\PP_{\bar{E}}} \ulcorner 2^{\kappa^{+}}=\kappa^{+3}, 2^{\kappa^{++}}=\kappa^{+4}, 2^{\kappa^{+3}}=\kappa^{+5} \urcorner$.
\end{claim}

 Replace Claim 10.6 with the following:

\begin{claim}
 Let $G$ be $\mathbb{P}$$_{\bar{E}}$-generic with
$p = p_l *... * p_k *...* p_0 \in G$
and $\bar{\epsilon}$ be such that
$p_{l..k} \in \mathbb{P}$$_{\bar{\epsilon}}$ and $l(\bar{\epsilon}) = 0$. Let $\nu = \kappa(p_k^0)$.
Then, in $V[G]$, all cardinals in $[\nu^+, \kappa^{0}(\bar{\epsilon})^{++}]$ are preserved and
    $2^{\nu^{+}} = \nu^{+3}$,
    $2^{\nu^{++}} = \nu^{+4}$,
    $2^{\nu^{+3}} = \nu^{+5}$,
    $2^{\nu^{+4}} = \kappa^{0}(\bar{\epsilon})^{++}$.
\end{claim}

Now the proof of our main theorem goes as follows: Let $p^* \in \mathbb{P}$$_{\bar{E}}^{*}$ such that $\kappa(p^{*0})$ is inaccessible
and  $G$ be $\mathbb{P}$$_{\bar{E}}$-generic with $p^* \in G$. Set

$\hspace{4cm}$ $M = \bigcup \{ p_{0}^{\bar{E}_{\kappa}}: p \in G  \},$

$\hspace{4cm}$ $C = \bigcup \{ \kappa(p_{0}^{\bar{E}_{\kappa}}): p \in G  \}.$

Note that $M$ is a Radin generic sequence for the extender sequence
$\bar{E}_{\kappa},$
hence $C \subset \kappa$ is a club. Also the first ordinal in this
club is $\lambda = \kappa(p^{*0})$. We first investigate the range $(\lambda, \kappa)$ in $V[G].$ Note that, by 10.5 from \cite{merimovich}, for $\bar{\epsilon} \in M$ it is enough to
use $\mathbb{P}$$_{\bar{\epsilon}}$ in order to understand $V_{\kappa^0(\bar{\epsilon})}^{V[G]}$.
So let $\mu \in (\lambda, \kappa).$
\begin{itemize}
\item $\mu \in limC:$ Then there is $\bar{\epsilon} \in M$ such that
        $l(\bar{\epsilon}) > 0$ and $\kappa(\bar{\epsilon}) = \mu$. By 10.7 from \cite{merimovich} $\mu$ remains a cardinal and by Claim 2.27, $2^{\mu}=\mu^{++},$ \item $\mu \in C \setminus \lim C$: Then there is $\bar{\epsilon} \in M$ such that
        $l(\bar{\epsilon}) = 0$ and $\kappa(\bar{\epsilon}) = \mu$.
    Let $\mu_2 \in C$ be the $C$-immediate predecessor of $\mu$. By  Claim 2.29 we have all cardinals in $[\mu_{2}^{+}, \mu^{++} ]$ are preserved and
    $2^{\mu_2^{+}} = \mu_2^{+3}$,
    $2^{\mu_2^{++}} = \mu_2^{+4}$,
    $2^{\mu_2^{+3}} = \mu_2^{+5}$,
    $2^{\mu_2^{+4}} = \mu^{++}$. In particular $2^{\mu} \geq \mu^{++}.$
\item $\mu \notin C:$ Then there are $\mu_2$ and $\mu_1$ two successive points in $C$ such that $\mu \in (\mu_2, \mu_1).$ By above, if $\mu \in \{ \mu_2^+, \mu_2^{++}, \mu_2^{+3}\}$ then $2^{\mu}=\mu^{++},$ and if $\mu \in (\mu_2^{+3}, \mu_1)$ then $2^{\mu}\geq\mu_1^{++} > \mu^{+}.$

\end{itemize}
We may  note that the above argument also shows that all cardinals $> \lambda$ are preserved in $V[G],$ and since forcing with $\mathbb{P}$$_{\bar{E}}$ adds no new bounded subsets to $\lambda,$ hence all cardinals are preserved in $V[G].$ Finally let $H$ be $Add(\aleph_{0}, \lambda^{++})_{V[G]}-$generic over $V[G]$. It is then clear that in $V[G][H]$ all cardinals are preserved and that the $GCH$ fails everywhere below (and at) $\kappa.$

\subsection{Proof of Theorem 1.1}

Suppose that $K$ is the canonical inner model for a $H(\kappa^{+3})$-strong
cardinal $\kappa$. Let $S$ be the discrete set of measurable cardinals
below $\kappa$ in $K$ which are not limits of measurable cardinals in
$K$ and for each $\alpha \in S$ let $U_{\alpha}$ be the unique normal
measure on $\alpha$ in $K$. Consider the forcing  $\mathbb{P}$$_{S}$ and let
$(x_\alpha:\alpha\in S)$ be \emph{$\mathbb{P}$$_S$-generic} over
$K$. By Theorem 2.2, $\kappa$ remains $H(\kappa^{+3})-$strong in
$K[(x_\alpha:\alpha\in S)],$ thus we can apply Theorem 2.10 to find a
cofinality-preserving forcing $\mathbb{P}$ which adds a real $R$  over
$K[(x_\alpha:\alpha\in S)]$ such that $K[(x_\alpha:\alpha\in
S)][R]=K[R]$ and $\kappa$ remains $H(\kappa^{+3})-$strong in $K[R].$
By Theorem 2.13 there exists a cardinal-preserving forcing
$\mathbb{Q}$ and a subset $C \subseteq S$, $\mathbb{Q}$-generic over
$K[R]$ such that in $K[R][C]$, $\kappa$ remains inaccessible and for
every $\lambda < \kappa$, $2^{\lambda} >\lambda^{+}$.
We now define a new sequence $(y_\alpha:\alpha\in S)$ by

\begin{center}
$y_\alpha = \left\{
\begin{array}{l}
 x_{\alpha} \hspace{3.46cm} \text{ if } \alpha \in C,\\
      x_{\alpha}- \{min(x_{\alpha})\}  \hspace{1.4cm} \text{ otherwise }.

     \end{array} \right.$
\end{center}
By Lemma 3,  $(y_\alpha:\alpha\in S)$ is $\mathbb{P}$$_{S}-$generic over $K.$ Let $W=V_{\kappa}^{K[(y_\alpha:\alpha\in S)]}$ and $V=W[R]=V_{\kappa}^{K[R][C]}.$ Then the pair $(W,V)$ is as required.
Theorem 1.1 follows.
\end{proof}

\section{Proof of Theorem 1.2}

\subsection{A class version of the Prikry product}

Let $S$ be a  class of measurable cardinals which is discrete.  Fix normal measures $U_{\alpha}$  on $\alpha$ for  $\alpha$ in $S$. We define a class version of the Prikry product as follows.

Conditions in $\mathbb{P}$$_{S}$ are triples $p=(X^p,S^p,H^p)$ such that

\noindent
(1) $X^p$ is a subset of $S$,\\
(2) $S^{p} \in \prod_{\alpha \in X^{p}}[\alpha\setminus sup(S \cap \alpha)]^{< \omega}$,\\
(3) $H^{p} \in \prod_{\alpha \in X^p} U_{\alpha}$,\\
(4) $supp(p)=\{ \alpha:S^{p}(\alpha) \neq \emptyset \}$ is finite,\\
(5) $\forall \alpha \in X^{p}$, $max \ S^{p}(\alpha) < min \ H^{p}(\alpha)$.

Let $p,q \in \mathbb{P}$$_{S}$. Then $p \leq q$ ($p$ is an extension of $q$) iff

\noindent
(1) $X^p \supseteq X^q$,\\
(2) $\forall \alpha \in X^q, S^p(\alpha)$ is an end extension of $S^q(\alpha)$,\\
(3) $\forall \alpha \in X^q, S^p(\alpha)\setminus S^q(\alpha) \subseteq H^q(\alpha)$,\\
(4) $\forall \alpha \in X^q, H^p(\alpha) \subseteq H^q(\alpha)$.

We also define an auxiliary relation $ \leq^*$ on $\mathbb{P}$$_{S}$ as follows.
Let $p,q \in \mathbb{P}$$_{S}$. Then $p \leq^* q$ ($p$ is a direct or Prikry extension of $q$) iff

\noindent
(1) $X^p \supseteq X^q$,\\
(2)  $\forall \alpha \in X^q, S^p(\alpha) = S^q(\alpha)$,\\
(3) $\forall \alpha \in X^q, H^p(\alpha) \subseteq H^q(\alpha)$.

For $p \leq q$ in $\mathbb{P}$$_{S}$ we define the distance function $|p - q|$ to be a function on $X^q$ so that for $\alpha \in X^q, |p - q|(\alpha)=l(S^p(\alpha))- l(S^q(\alpha)).$
Also let $\mathbb{P}$$_{S}\upharpoonright X = \{ p \in \mathbb{P}$$_{S}: X^{p} \subseteq X \}.$
It is clear that for any
$X \subseteq S, \mathbb{P}$$_{S} \simeq (\mathbb{P}$$_{S}\upharpoonright X) \times (\mathbb{P}$$_{S}\upharpoonright S\setminus X)$.

\begin{lemma}
$\MPB_S$ is pretame: Given $p\in\MPB_S$ and a definable sequence $(D_i:i<\alpha)$ of dense classes below $p$ there exist $q\leq p$ and a sequence $(d_i:i<\alpha)\in V$ such that each $d_i\subseteq D_i$ is predense below $q.$
\end{lemma}
\begin{proof} Let $p_0=p$ and let $\delta_0>\alpha, \delta_0 \notin S$ be such that $X^{p_{o}}\subseteq \delta_0.$ By repeatedly thinning the measure one sets above $\delta_0$ we can find $p_1\leq p_0$ and $\delta_1>\delta_0, \delta_1 \notin S$ such that:
\begin{enumerate}
\item $X^{p_{1}}\subseteq \delta_1,$  \item $p_1$ agrees with $p_0$ below $\delta_0,$ \item for any $q\leq p_0, q \in \MPB_S\upharpoonright\delta_0$ and any $i<\alpha$ if $q$ has an extension $r$ meeting $D_i$ which agrees with $q$ below $\delta_0,$ then there is such an $r \in \MPB_S\upharpoonright\delta_1$ whose measure one sets contain those of $p_1.$
\end{enumerate}
Now repeat this $\omega$ times, producing $p_0, p_1, \ldots$  and let
$q$ be the greatest lower bound, obtained in a natural way, so that $q
\leq^* p_n$ for each $n\in \omega$. Also for each $i<\alpha$ set $d_i
=D_i\upharpoonright \delta_{\omega}=\{r\in D_i:X^{r}\subseteq
\delta_{\omega}  \}$, where $\delta_{\omega}=sup_{n<\omega}\delta_n$.
We show that $q$ and the sequence $(d_i:i<\alpha)$ are as required.

Fix $i<\alpha.$ Suppose $r\leq q, r\in D_i.$ Let $n$ be large enough so that $supp(r)\cap\delta_{\omega}\subseteq\delta_n.$ At stage $n+1$ we considered $r\upharpoonright\delta_n$ and saw that it has an extension meeting $D_i$ and agreeing with it below $\delta_n,$ so it must have such an extension whose measure one sets contain those of $p_{n+1}$ and therefore those of $q.$ This extension is compatible with $r$ and therefore $r$ has an extension which meets $d_i,$ as required.
\end{proof}

It follows from \cite{my-book}, Theorem 2.18, and the above Lemma  that the forcing relation is definable. The proof of the following lemma uses ideas from \cite{magidor}.
\begin{lemma}$(\mathbb{P}$$_{S}, \leq, \leq^*)$ has the Prikry property, i.e for each sentence $\phi$ of the forcing language of  $(\mathbb{P}$$_{S}, \leq)$, and any $p \in \mathbb{P}$$_{S}$ there is $q \leq^* p$ which decides $\phi$.
\end{lemma}
\begin{proof}
Suppose $\phi$ is a sentence of the forcing language, $p \in \PP_S$.
Let $p=(X^p,S^p,H^p)$, let $\phi^0$ denote $\neg \phi$ and $\phi^1$
denote
$\phi$.

By reflection and by strengthening $p$ in the sense of $\leq^*$, we may
assume that $X^p=\gamma$, where it is dense in $\PP_S \cap V_\gamma$ to
decide $\phi$.

For $\alpha < \gamma$, let $\mathcal{S}_\alpha$ denote the set of $S^q$
where $q \in \mathbb{P}_{X_p \cap \alpha}$.
For  $s \in \mathcal{S}_\alpha$, set
$F_{s,\alpha}(\delta_1, \hdots, \delta_n)=i$ iff there is $q \leq p$
such
that $X^q=\gamma$,
$S^q \upharpoonright (X^p \setminus \{ \alpha\}) = s$,
$S^q(\alpha)=S^p(\alpha)*(\delta_1, \hdots \delta_n)$ and $q \Vdash
\phi^i$.
Set $F_{s,\alpha}(\delta_1, \hdots, \delta_n)=2$ iff no such $q$ exists.

Let $H(s, \alpha) \subseteq H^p(\alpha)$, $H(s, \alpha) \in U_\alpha$ be
homogeneous for $F_{s,\alpha}$, and let $H(\alpha)= \bigcap_{s \in
\mathcal{S}_\alpha} H(s,\alpha)$.
Then $H(\alpha) \in U_\alpha$ (as $S$ is discrete) and we can set
$q=(X^q,
S^q, H^q)$,
where $X^q=X^p$, $S^q=S^p$ and $H^q(\alpha) =  H(\alpha)$ for $\alpha
\in
X^q$.

It is clear that $q \leq^* p$. We show that there is a $\leq^*$
extension
of $q$ which decides $\phi$. Suppose not.
Let $r \leq q$ be such that $r$ decides $\phi$. Suppose for example that
$r \Vdash \phi$. We may further suppose that $r$ is so that $| r - q |$
is
minimal, and that $X^r=\gamma$. We note that $| r - q |$ is not the
$0$-funtion.

Let $\alpha<\gamma$ be the maximum of $\operatorname{supp}(r)$, and let
$r_0$ be obtained from $r$ by replacing $S^r(\alpha)$ with
$S^p(\alpha)$.
We claim that $r_0$ already decides $\phi$.
For let $w \leq r_0$, such that $w \Vdash \neg \phi$.
Let $n$ denote $|S^{r}(\alpha)|$;
We may assume that $|S^w(\alpha)| \geq n$.
Let $s$ denote $S^{r_0}$ and $\delta_1, \hdots \delta_k$ denote
$S^{w}(\alpha)$.
Then $r$ witnesses that $F_{s,\alpha}$ has constant value $1$ on
$[H(s,\alpha)]^n$.
Moreover, $\{\delta_1, \hdots \delta_n\} \in [H(s,\alpha)]^n$.
So there is $r_1$ such that $r_1 \Vdash \phi$, $S^{r_1}\upharpoonright
(X^p \setminus \{\alpha \}) =s$ and $S^{r_1}(\alpha)=\{\delta_1, \hdots,
\delta_n\}$.
It is easily checked that $S^{r_1}$ and $S^w\upharpoonright\gamma$ are
compatible, so $r_1$ and $w$ are compatible, contradicting that they
decide $\phi$ differently.
Thus, $r_0$ already decides $\phi$, contradicting the minimality of $r$.

\end{proof}

We can now easily show that $\MPB_{S}$  preserves cardinals and the $GCH.$
Also as in the usual Prikry product a  $\PP_{S}-$generic is uniquely determined by a sequence $\langle x_{\alpha}: \alpha \in S \rangle$ where each $x_\alpha$ is an $\omega-$sequence cofinal in $\alpha.$ As before, with a slight abuse of terminology, we say that $\langle x_{\alpha}: \alpha \in S \rangle$ is $\PP_{S}-$generic. The following is an analogue of Lemma 2.1 and its proof is essentially the same.

\begin{lemma}
(a) The sequence $\langle x_{\alpha}: \alpha \in S \rangle$ obeys the following ``geometric property'': if $\langle X_{\alpha}: \alpha \in S \rangle$ is a definable class (in $V$) and  $X_{\alpha} \in U_{\alpha}$ for each $\alpha \in S$ then $\bigcup_{\alpha \in S}x_{\alpha} \setminus X_{\alpha}$ is finite.

(b) Conversely, suppose that $\langle y_\alpha:\alpha\in S \rangle$ is a sequence
(in any outer model of $V$)
satisfying the geometric property stated above. Then
$\langle y_\alpha: \alpha\in S \rangle$ is $\PP_S$-generic over $V$.
\end{lemma}

\subsection{Proof of Theorem 1.2}

Suppose $M$ is a model of $ZFC+GCH+$ there exists a proper class of measurable cardinals. Let $S$ be a discrete class of measurable cardinals and for each $\alpha \in S$ fix a normal measure $U_{\alpha}$ over $\alpha.$ Consider the forcing  $\mathbb{P}$$_{S}$ and let $\langle x_\alpha:\alpha\in S \rangle$ be \emph{$\mathbb{P}$$_S$-generic} over $M$. By Jensen's coding theorem (see \cite{my-book}) there exists a cofinality-preserving forcing $\mathbb{P}$ which adds a real $R$  over $M[\langle x_\alpha:\alpha\in S \rangle]$ such that $M[\langle x_\alpha:\alpha\in S \rangle][R]=L[R].$
In $L[R]$ define the function $F^*: REG \rightarrow CARD$ by
\begin{center}
$F^{*}(\kappa)=\left\{
\begin{array}{l}
       F(\kappa) \hspace{1.2cm} \text{ if } cfF(\kappa) \neq \omega,\\
       F(\kappa)^+ \hspace{1cm} \text{ if }  cfF(\kappa) = \omega.

     \end{array} \right.$
\end{center}

Let $\mathbb{R}$ be the Easton forcing corresponding to $F^*$ for blowing up the power of each regular cardinal $\kappa$ to $F^{*}(\kappa)$  and let $C \subseteq S$ be $\mathbb{R}-$genreric over $L[R].$

We now define a new sequence $\langle y_{\alpha}: \alpha \in S \rangle$ by
\begin{center}
$y_\alpha = \left\{
\begin{array}{l}
 x_{\alpha} \hspace{3.46cm} \text{ if } \alpha \in C,\\
      x_{\alpha}- \{min(x_{\alpha})\}  \hspace{1.4cm} \text{ otherwise }.

     \end{array} \right.$
\end{center}

Using lemma 3.3,  $\langle y_{\alpha}: \alpha \in S \rangle$ is $\mathbb{P}$$_{S}-$generic over $M$.
Let $W=M[ \langle y_{\alpha}: \alpha \in S \rangle] $, and $V=M[\langle y_{\alpha}: \alpha \in S \rangle, R] $. Then the pair $(W,V)$ is as required. This completes the proof of Theorem 1.2.
\end{proof}

\section{A few more results}

The following is proved in \cite{eslami-gol}. We give a proof for completeness.
\begin{lemma}
Suppose that $R$ is a real in $V.$ Then there are two reals $a$ and $b$ such that

$(a)$ $a$ and $b$ are Cohen generic over $V,$

$(b)$ all of the models $V, V[a], V[b]$ and $V[a,b]$ have the same cardinals,

$(c)$ $R \in L[a,b].$
\end{lemma}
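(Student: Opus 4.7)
The plan is to introduce a single Cohen real and then split it into two, using the bit-flip automorphism of Cohen forcing to encode $R$. Explicitly, let $c$ be a Cohen real over $V$ for the standard Cohen forcing $\operatorname{Add}(\omega,1)$, and define
\[
a = c, \qquad b = c \oplus R,
\]
where $\oplus$ denotes bitwise addition mod $2$ on $2^\omega$ (and on finite binary strings). The key observation is that, since $R \in V$, the map $x \mapsto x \oplus R$ restricts to an automorphism of $\operatorname{Add}(\omega,1)$ in $V$: if $D \in V$ is dense in $\operatorname{Add}(\omega,1)$, then so is $D' = \{\, s \oplus (R\restriction|s|) : s \in D \,\}$, and $c$ meets $D'$ iff $c \oplus R$ meets $D$. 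Hence $b$ is Cohen generic over $V$, which gives (a).

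For (b), Cohen forcing is c.c.c., so $V$ and $V[c]$ have the same cardinals. Now $V[a] = V[c]$, and $V[b]$ is a single-Cohen extension of $V$ by (a), so both have the same cardinals as $V$. Finally
\[
V[a,b] = V[c, c\oplus R] = V[c, R] = V[c],
\]
the last equality using $R \in V$; so $V[a,b]$ also has the same cardinals as $V$. For (c), we observe that $R = a \oplus b$, and $\oplus$ is a primitive recursive operation on $2^\omega$, so $R$ is recursive in the pair $(a,b)$ and therefore belongs to $L[a,b]$.

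The only substantive step is verifying that $b = c \oplus R$ is genuinely Cohen generic over $V$, which is precisely where we use that $R$ lies in the ground model $V$ (so that the bit-flip automorphism is an element of $V$ and preserves $V$-dense sets). Everything else is either ccc cardinal preservation or elementary recursion theory, so this lemma, unlike the main theorems above, does not require any large cardinal input.
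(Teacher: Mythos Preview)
Your proof is correct, and in fact simpler than the paper's. The paper forces with a two-step iteration $\Add(\omega,1)*\Add(\omega,1)$ to obtain $a^*,b^*$, sets $a=a^*$, and then builds $b$ by a more elaborate coding: on the positions where $a^*$ takes the value $1$ it copies $b^*$, and on the positions where $a^*$ takes the value $0$ (enumerated as $\langle k_N:N<\omega\rangle$) it writes the bits of $R$. Genericity of $b$ over $V$ is then verified by a direct density argument rather than by an automorphism.

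The two constructions differ in the shape of the resulting models. In your version one has $V[a]=V[b]=V[a,b]=V[c]$, a single Cohen extension of $V$; the pair is ``degenerate'' in the sense that the second real adds nothing new over $V$. In the paper's version $V[a,b]$ is strictly larger than both $V[a]$ and $V[b]$, since $b$ carries a genuinely new piece of $b^*$ on the $1$-positions of $a^*$. For the lemma as stated this distinction is irrelevant, and your bit-flip automorphism argument is the cleaner route. It is also adequate for the intended applications (Theorems~4.2 and~4.3): there one applies the lemma over the model $W[R]$, and with your $a=c$, $b=c\oplus R$ one gets $W[a]=W[c]$ and $W[b]=W[c\oplus R]$, each a single Cohen extension of $W$ (hence GCH), while $W[a,b]=W[c,R]=W[R][c]$ inherits the global failure of GCH from $W[R]$.
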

\begin{proof} Working in $V$, let $a^*$ be $Add(\omega, 1)-$generic over $V$ and let $b^*$ be $Add(\omega, 1)-$generic over $V[a^{*}],$ where $Add(\omega, 1)$ is the Cohen
forcing for adding a new real. Note that $V[a^*]$ and $V[a^*,
b^*]$ are cardinal preserving generic extensions of $V$. Working
in $V[a^*, b^*]$ let $\langle k_N: N<\omega \rangle$ be an
increasing enumeration of $\{N: a^{*}(N)=0\}$ and let $a=a^*$ and
$b=\{N: b^{*}(N)=a^{*}(N)=1 \} \cup \{k_N:R(N)=1 \}.$
Then clearly $R \in L[\langle k_N: N<\omega \rangle,
b]\subseteq L[a,b]$ as $R=\{N: k_N \in b \}.$

We show that $b$ is $Add(\omega, 1)-$generic over $V$. It suffices
to prove the following

$\hspace{1.5cm}$ For any $(p,q) \in Add(\omega,
1)*\lusim{Add}(\omega, 1)$ and any dense

(*) $\hspace{0.85cm}$ open subset $D \in V$ of $Add(\omega, 1)$ there is
$(\bar{p},\bar{q}) \leq (p,q)$

$\hspace{1.5cm}$ such that $(\bar{p},\bar{q}) \vdash
 \ulcorner \dot{b}$ extends some element of $D \urcorner$.

Let $(p,q)$ and $D$ be as above. By extending one of $p$ or $q$ if
necessary, we can assume that $lh(p)=lh(q)$. Let $\langle k_N: N<M
\rangle$ be an increasing enumeration of $\{N<lh(p): p(N)=0\}.$
Let $s: lh(p) \rightarrow 2$ be such that considered as a subset
of $\omega,$

\begin{center}
$s=\{ N<lh(p): p(N)=q(N)=1 \} \cup \{k_N: N<M, R(N)=1 \}.$
\end{center}

Let $t \in D$ be such that $t \leq s.$
Extend $p,q$ to $\bar{p}, \bar{q}$ of length $lh(t)$
so that for $i$ in the interval $[lh(s),lh(t))$
\begin{itemize}
\item $\bar{p}(i)=1$, \item $\bar{q}(i)=1$ iff $i \in t.$
\end{itemize}

Then
\begin{center}
$t=\{ N<lh(t): \bar{p}(N)=\bar{q}(N)=1 \} \cup \{k_N: N<M, R(N)=1
\}.$
\end{center}
Thus
$(\bar{p},\bar{q}) \vdash \ulcorner \dot{b}$ extends t $\urcorner$
and $(*)$ follows. This completes the proof of the Lemma.
\end{proof}

Using the above Lemma and Theorems 1.1 and 1.2 we can obtain the following.

\begin{theorem}
Assume the consistency of an $H(\kappa^{+3})$-strong cardinal
$\kappa.$ Then there exist a model $W$ of $ZFC$ and two reals $a$ and $b$ such that

$(a)$ The models $W, W[a]$ and $W[b]$ have the same cardinals and satisfy the

$\hspace{.5cm}$ $GCH,$

$(b)$  $GCH$ fails at all infinite cardinals in $W[a,b].$

\end{theorem}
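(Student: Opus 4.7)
The plan is to compose Theorem 1.1 with Lemma 4.1 in an essentially one-line argument. First, apply Theorem 1.1 to the hypothesized $H(\kappa^{+3})$-strong cardinal to obtain a pair $(W,V)$ of models of $ZFC$ such that $W$ and $V$ share the same cardinals, $W\models GCH$, $V=W[R]$ for some real $R$, and $GCH$ fails at every infinite cardinal of $V$. Then, working inside $V$, apply Lemma 4.1 to the real $R$ to obtain reals $a$ and $b$ that are Cohen-generic over $V$, satisfy $R\in L[a,b]$, and such that the four models $V$, $V[a]$, $V[b]$, $V[a,b]$ all have the same cardinals.

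To verify the conclusions, note first that since $W\subseteq V$ and the Cohen forcing $\Add(\omega,1)$ has the same interpretation in $W$ and in $V$, any real that is Cohen-generic over $V$ is automatically Cohen-generic over $W$. Consequently $W[a]$ and $W[b]$ are single-Cohen-real extensions of $W$; these preserve cardinals and preserve $GCH$, yielding part (a) for $W[a]$ and $W[b]$. For the joint extension, observe that $R\in L[a,b]\subseteq W[a,b]$ gives $V=W[R]\subseteq W[a,b]$, while trivially $W[a,b]\subseteq W[R][a,b]=V[a,b]$; hence $W[a,b]=V[a,b]$. By Lemma 4.1 the latter has the same cardinals as $V$, and therefore as $W$, completing (a). Since $V\subseteq W[a,b]$ with identical successor cardinals, and $V$ satisfies $2^{\lambda}>\lambda^{+}$ for every infinite cardinal $\lambda$, the same inequality holds in $W[a,b]$, proving (b).

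There is no real obstacle: once the two cited results are invoked the argument is essentially automatic. The only point worth flagging is the transfer of Cohen-genericity from the outer model $V$ down to the inner model $W$, which is immediate because every $W$-dense subset of $\Add(\omega,1)$ is also $V$-dense. Thus the reals $a$ and $b$ provided by Lemma 4.1 work as witnesses over $W$, and the theorem follows.
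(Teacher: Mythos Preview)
Your argument is correct and follows precisely the route the paper intends: the paper itself gives no detailed proof, stating only that the result follows from Lemma 4.1 together with Theorem 1.1, and your write-up supplies exactly those details. The one subtlety you flag---that Cohen genericity over $V$ descends to $W$---and the identification $W[a,b]=V[a,b]$ via $R\in L[a,b]$ are indeed the only points requiring comment, and you handle both correctly.
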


\begin{theorem}
Let $M$ be a model of $ZFC+GCH+$ there exists a proper class of measurable cardinals. In $M$ let $F:REG \longrightarrow CARD$ be an Easton function. Then there exist a cardinal preserving generic extension $W$ of $M$ and two reals $a$ and $b$ such that

$(a)$ The models $W, W[a], W[b]$ and $W[a,b]$ have the
same cardinals,

$(b)$ $W[a]$ and $W[b]$ satisfy $GCH$,

$(c)$ $W[a,b] \models \ulcorner \forall \kappa \in REG, 2^{\kappa} \geq F(\kappa)
\urcorner.$
\end{theorem}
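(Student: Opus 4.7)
My plan is to combine Theorem 1.2 with Lemma 4.1 in the same way that Theorem 4.2 combines Theorem 1.1 with Lemma 4.1. The idea is to first obtain a single real $R$ witnessing the Easton behaviour, and then split $R$ into two Cohen generics using Lemma 4.1, arranging the decomposition so that both $W[a]$ and $W[b]$ remain GCH models while $W[a,b]$ still contains $R$ and hence inherits the large power-set behaviour.

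In detail, I would first apply Theorem 1.2 to $M$ and the given Easton function $F$ to produce a pair $(W_0, V_0)$ of cardinal-preserving extensions of $M$ with $W_0 \models GCH$, $V_0 = W_0[R]$ for some real $R$, and $V_0 \models \forall \kappa \in REG,\ 2^{\kappa} \geq F(\kappa)$. Next, working inside $W_0$, I would apply Lemma 4.1 to this real $R$, obtaining two reals $a$ and $b$ such that $a$ is $\Add(\omega,1)$-generic over $W_0$, $b$ is $\Add(\omega,1)$-generic over $W_0[a]$, the four models $W_0, W_0[a], W_0[b], W_0[a,b]$ have the same cardinals, and $R \in L[a,b] \subseteq W_0[a,b]$.

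Setting $W = W_0$, the verification of (a)--(c) then runs as follows. Cardinals: all four models share the cardinals of $W_0$ by Lemma 4.1, and $W_0$ is a cardinal-preserving extension of $M$, so (a) holds. GCH in $W[a]$ and $W[b]$: since $\Add(\omega,1)$ is countable, it adds no new subsets of any uncountable cardinal, so $(2^\kappa)^{W[a]} = (2^\kappa)^{W}$ for uncountable $\kappa$, while $(2^{\omega})^{W[a]} \leq (\omega_1)^{W}$ by the usual nice-name count, giving $(2^{\omega})^{W[a]} = \omega_1$; an identical argument works for $W[b]$, so (b) follows from GCH in $W$. Finally, since $R \in L[a,b] \subseteq W[a,b]$, we have $V_0 = W[R] \subseteq W[a,b]$, so for every regular cardinal $\kappa$,
\begin{equation*}
(2^{\kappa})^{W[a,b]} \;\geq\; (2^{\kappa})^{V_0} \;\geq\; F(\kappa),
\end{equation*}
yielding (c).

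There is no real obstacle here beyond checking that the pieces fit together; the only point requiring mild care is making sure Cohen forcing over $W_0$ preserves GCH (which is the standard nice-name count above) and that the inclusion $V_0 \subseteq W[a,b]$ is genuine, which follows because $R$ can be read off from $a$ and $b$ via the explicit formula $R = \{N : k_N \in b\}$ given in the proof of Lemma 4.1. Thus the full statement of Theorem 4.3 follows.
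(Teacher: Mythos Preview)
Your overall strategy is exactly what the paper intends (the paper itself only writes ``Using the above Lemma and Theorems 1.1 and 1.2 we can obtain the following'' and gives no further detail), but there is a slip in how you invoke Lemma~4.1. You write ``working inside $W_0$, I would apply Lemma~4.1 to this real $R$,'' yet Lemma~4.1 requires $R$ to lie in the ambient model, and $R\notin W_0$ since $V_0=W_0[R]$ is a proper extension. You must instead apply Lemma~4.1 with $V=V_0$: this yields $a,b$ Cohen generic over $V_0$ with $R\in L[a,b]$ and with $V_0,\,V_0[a],\,V_0[b],\,V_0[a,b]$ sharing the same cardinals. Because $W_0\subseteq V_0$, the reals $a,b$ are then automatically Cohen generic over $W_0$ as well, and since $W_0$ and $V_0$ already have the same cardinals (Theorem~1.2), the sandwich $W_0\subseteq W_0[a,b]\subseteq V_0[a,b]$ gives that all four models $W_0,\,W_0[a],\,W_0[b],\,W_0[a,b]$ have the same cardinals. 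With this correction your verification of (a)--(c) goes through unchanged.

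One minor further point: Lemma~4.1 only asserts that $a$ and $b$ are each Cohen generic over the ground model $V$, not that $b$ is generic over $V[a]$; your stated conclusion ``$b$ is $\Add(\omega,1)$-generic over $W_0[a]$'' is therefore not what the lemma provides. Fortunately you never use this stronger claim --- for (b) you only need that $b$ is Cohen over $W_0$, which does follow --- so the argument is unaffected.
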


\noindent{\large\bf Acknowledgement}

The authors would like to thank the referee for supplying the correct proof of Lemma 3.2.

{Kurt G\"{o}del Research Center, University of Vienna,

E-mail address: sdf@logic.univie.ac.at}

{Department of Mathematics, Shahid Bahonar University of Kerman,
Kerman-Iran and School of Mathematics, Institute for Research in
Fundamental Sciences (IPM), P.O. Box: 19395-5746, Tehran-Iran.

E-mail address: golshani.m@gmail.com}

\end{document}